\newenvironment{proof}{\noindent {\bf Proof:}}{\hfill $\Box$}
\newtheorem{theorem}{Theorem}
\newtheorem{corollary}{Corollary}
\newtheorem{remark}{Remark}
\newtheorem{example}{Example}
\def\VDM{\mathrm{VDM}}
\def\x{\mathbf{x}}
\def\z{\mathbf{x}}
\def\p{\mathbf{p}}
\def\A{\mathbb{A}}
\def\y{\mathbf{y}}
\def\R{\mathbb{R}}
\def\N{\mathbb{N}}
\def\M{\mathbf{M}}
\def\C{\mathbb{C}}
\def\A{\mathbf{A}}
\def\B{\mathbf{B}}
\def\c{\mathbf{C}}
\def\z{\mathbf{z}}
\def\y{\mathbf{y}}
\def\d{\hat{d}}
\def\x{\mathbf{x}}
\def\c{\mathbf{c}}
\def\u{\mathbf{u}}
\def\v{\mathbf{v}}
\def\bvarepsilon{\boldsymbol{\varepsilon}}
\def\bxi{\boldsymbol{\xi}}
\def\balpha{\boldsymbol{\alpha}}
\def\bbeta{\boldsymbol{\beta}}
\def\bgamma{\boldsymbol{\gamma}}
\def\bpsi{\boldsymbol{\psi}}
\def\c{{\bf{c}}}
\def\balpha{\boldsymbol{\alpha}}
\def\bphi{\boldsymbol{\phi}}
\def\bpsi{\boldsymbol{\psi}}
\def\bmu{\boldsymbol{\mu}}
\def\1{\mathbf{1}}
\def\d{\mathrm{d}}
\begin{document}
	
	\title{\bf Approximate D-optimal design and\\ equilibrium measure\footnote{The second author  was supported by the AI Interdisciplinary Institute ANITI  funding through the french program
			``Investing for the Future PI3A" under the grant agreement number ANR-19-PI3A-0004. This research is also part of the programme DesCartes and is supported by the National Research Foundation, Prime Minister's Office, Singapore under its Campus for Research Excellence and Technological Enterprise (CREATE) programme.}}

	\author{Didier Henrion$^{1,2}$, Jean Bernard Lasserre$^{2,3}$}
	
	\footnotetext[1]{CNRS; LAAS; Universit\'e de Toulouse, France. }
	\footnotetext[2]{Faculty of Electrical Engineering, Czech Technical University in Prague, Czechia.}
	\footnotetext[3]{Toulouse School of Economics (TSE), Toulouse, France.}
	
	\date{Draft of \today}
	
	\maketitle
	
	\begin{abstract}
		We introduce a minor variant of 
		the approximate D-optimal design of experiments with a more general information matrix
		that takes into account the representation
		of the design space $S$. The main motivation (and result) is that if $S\subset\R^{d}$ is the unit ball, the unit box or the canonical simplex, then remarkably, for every dimension $d$ and every degree $n$,
		one obtains an optimal solution in closed form, namely the equilibrium measure of  $S$ (in pluripotential theory). Equivalently, for each degree $n$, the unique optimal solution is the vector of moments (up to degree $2n$) of the equilibrium measure of $S$.
		Hence finding an optimal design
		reduces to finding a cubature for the equilibrium measure,
		with atoms in $S$, positive weights, and exact up to degree $2n$. In addition, any resulting sequence of atomic D-optimal measures converges to the equilibrium measure of $S$ for the weak-star topology, as $n$ increases.
		Links with Fekete sets of points are also discussed. More general compact basic semi-algebraic sets are also considered, and a previously developed two-step design algorithm
		is easily adapted to this new variant of D-optimal design problem. 
	\end{abstract}
	
	\maketitle

\section{Introduction}

In this paper we consider the  approximate D-optimal design problem 
\begin{equation}
 \label{def-D-optimal}
 \max_{\phi\in\mathscr{P}(S)} \quad\log\det\, \M_n(\phi)\,,
\end{equation}
where $S\subset\R^d$ (the design space) is compact, $\mathscr{P}(S)$ is the set of probability measures on $S$, and
with $\v_n(\x):=(\x^{\balpha})$, $\balpha\in\N^d_n$, 
\[\M_n(\phi)\,:=\,\int_S \v_n(\x)\v_n(\x)^T\,d\phi(\x)\]
denotes the degree-$n$ moment matrix of $\phi$.

{The \emph{approximate} qualification 
is because one considers a (compact) set $S$ and a probability measure $\phi$ on $S$ rather than a discrete set 
of points and weights. %frequencies at which some of the points must be chosen. 
In their seminal paper on the equivalence theorem, Kiefer \& Wolfowitz \cite{kief-wolf} %consider the subset $\mathcal{C}\subset\mathscr{P}(S)$ of \emph{atomic} probability measures
%$\phi$ on $S$, supported on finitely many points\footnote{In fact, in view of Tchakaloff's theorem \cite{Tchaka} and its extension by Richter \cite{Richter,Schmudgen}, 
%one may indeed replace $\mathcal{C}$ with $\mathscr{P}(S)$ and solve \eqref{def-D-optimal}.}. 
show that $\phi^*$
%is an optimal solution of \eqref{def-D-optimal} (with $\mathcal{C}$ in lieu of $\mathscr{P}(S)$)
maximizes $\log\,\det \M_n(\phi)$ over $\mathscr{P}(S)$ if and only if %(i)
$\phi^*$ minimizes 
$\max_{\x\in S}K^\phi_n(\x,\x)$ (where $K^\phi_n(\x,\x)=\v_n(\x)\M_n(\phi)^{-1}\v_n(\x))$) over $\mathscr{P}(S)$, % and (ii) in addition,
attaining the value $s_n:={n+d\choose d}$. The polynomial $K^\phi_n(\x,\x)$ is a sum of squares and is called the degree-$2n$ Christoffel polynomial (the reciprocal of the Christoffel function) associated with $\phi$. Then 
\begin{equation}
\phi^*\,=\,\arg\max_{\phi\in \mathscr{P}(S)}\:\log\mathrm{det}\,\M_n(\phi)\,=\,\arg\min_{\phi\in\mathscr{P}(S)}\,\max_{\x\in S}K^{\phi}_n(\x,\x)
\end{equation}
 and $\max_{\x\in S}\,K^{\phi^*}_n(\x,\x)\,=\,s_n$ is attained  at all points of the support of $\phi^*$.
%Moreover, if the support of $\phi^*$ consists of exactly $s_n$ distinct points, then they all have same weight $1/s_n$ 
%and they are called \emph{Fekete} points of $S$. However,
%this latter property happens in rather exceptional cases;
%see e.g. \cite{Bloom,Bos-1,Bos-2}.
}

In this paper, we restrict ourselves to three specific sets $S$,  namely the Euclidean unit ball, the unit box, and the canonical simplex. We introduce the following slight variant of \eqref{def-D-optimal}:
\begin{equation}
\label{D-variant}
\max_{\phi\in\mathscr{P}(S)} \quad\log\,\mathrm{det}\,\M_n(\phi)+ \sum_{g\in G}\log\,\mathrm{det}\,\M_{n-d_g}(g\phi) \,\end{equation}
where 
$g\phi$ is the measure {on $S$} which is absolutely continuous with respect to $\phi$ with density $g$, $d_g=\lceil\mathrm{deg}(g)/2\rceil$, and {$G\subset\R[\x]$ is  an appropriate set of generators of $S$,  i.e. $S=\{\x: g(\x)\geq0\,\:\forall g\in G\}$.}
For instance if $S$ is the Euclidean unit ball then 
$G$ is the singleton $\x\mapsto g(\x):=1-\Vert\x\Vert^2$, and \eqref{D-variant} reads:
\[\max_{\phi\in\mathscr{P}(S)} \quad\log\,\mathrm{det}\,\M_n(\phi)+ \log\,\mathrm{det}\,\M_{n-1}(g\phi)\,.\]
That is, one has replaced the information matrix $\M_n(\phi)$ with the new information matrix
\[\left[\begin{array}{cc}\M_n(\phi) &0\\0&\M_{n-1}(g\phi)\end{array}\right]\,.\]
Notice that the boundary $\partial S$ of $S$ is now clearly involved via the polynomial $g$ in the measure $g\phi$. 
%In the univariate case $S=[-1,1]$ and $x\mapsto g(x):=1-x^2$, our framework shares some similarities with the generalization of D-optimal design proposed in \cite{dette,kiefer2,lauter}
%. In \cite{dette,lauter}, with a so-called \emph{efficiency} function of the form $\lambda(x):=(1+x)^u(1-x)^v$ with $u,v\in\{0,1\}$, $\beta_{n-1}=n/(2n+1)$, $\beta_n=n+1/(2n+1)$, 
%one minimizes the criterion
%\[\frac{\beta_{n-1}\log\,\mathrm{det}\,\M_{n-1}(\lambda\phi)}{n}+
%\frac{\beta_n\log\,\mathrm{det}\,\M_{n}(\lambda\phi)}{n+1}\,,\]
%which differs from \eqref{D-variant}.
%%, where the measure $g\phi$ is involvedin $\M_{n-1}$ and not in $\M_n$. 
%That is, in \cite{dette,lauter} 
%where the authors consider the 
%product of information matrices related to $n+1$ different regressions models.
% and with the \emph{same} efficiency function $\lambda$, whereas in \eqref{D-variant} one also considers several 
%regression models but each with a specific efficiency function. 
In \eqref{D-variant} the matrix $\M_{n-1}(g\phi)$ is associated with a regression model of the form
\[y\,=\,v_n(\x)^T\theta+\frac{\varepsilon}{\sqrt{g(\x)}}\,,\]
i.e., when the noise explodes close to the boundary of $S$\footnote{The authors thank Y. de Castro and F. Gamboa for providing such a simple interpretation.}.

\paragraph{Contribution}
We show that {for certain specific domains $S$}, for every degree $n$ and every dimension $d$, the \emph{equilibrium measure} 
$\phi^*$ of $S$ (in pluripotential theory \cite{klimek}) is an optimal solution of the variant \eqref{D-variant}
of the D-optimal design problem \eqref{def-D-optimal}.
This variant \eqref{D-variant} takes into account explicitly the boundary of $S$ and its criterion tends to favor points in the interior of $S$ (as points in $\partial S$  do not contribute
to some of the  information matrices $\M_{n-d_g}(g\phi)$). 
The resulting optimality conditions link the variant \eqref{D-variant} with a generalized (polynomial) Pell's equation investigated in \cite{lass-cras,lass-tams}. 

{
Notice that as $S$ is compact, by Richter-Tchakaloff's theorem \cite[Theorem 1.24]{Schmudgen},
problem \eqref{def-D-optimal} has always an optimal measure supported on finitely many points.} 
Hence:
\begin{center}
\emph{The variant \eqref{D-variant} of the D-optimal design problem \eqref{def-D-optimal} reduces to
finding a cubature for the equilibrium measure $\phi^*$ of $S$, with positive weights, atoms in $S$, and exact up to degree $2n$.}
\end{center}
{Appropriate techniques from numerical analysis to compute cubatures \cite{cools-1,cools-2,Gautschi} could then be combined with the arsenal of existing techniques. Alternatively, the two-step design algorithm provided in \cite{dghl19} to solve
\eqref{def-D-optimal}, reduces only to step-2 as the output of step $1$ (moments of the atomic measure supported on an optimal design) is now readily available.}

In addition, if $\nu^*_n$ is any atomic  probability measure, optimal solution of \eqref{D-variant} (hence whose support is a degree-$n$ optimal design), then  the resulting sequence $(\nu^*_n)_{n\in\N}$ 
converges weak-star to the equilibrium measure $\phi^*$ of $S$, as does
any sequence of probability measures equi-supported on so-called
\emph{Fekete} points of $S$. However, as already noted in the literature linking approximation theory with the D-optimal design problem, probability measures supported on Fekete points are rarely optimal solutions to the standard D-optimal design problem \eqref{def-D-optimal}; see e.g. \cite{Bloom,Bos-1,Bos-2}.

Last but not least, for the univariate case $S=[-1,1]$, the atomic probability measure $\nu^*_n$ supported on the $n+1$ roots  of the degree-$(n+1)$ Chebyshev polynomial of first kind (and with equal weights) is optimal for \eqref{D-variant}.
Hence the support of the celebrated Gauss-Chebyshev quadrature is an optimal solution of the variant \eqref{D-variant} of \eqref{def-D-optimal}. 
We emphasize that  $\nu^*_n$ \emph{cannot} be an optimal solution of the standard D-optimal design problem \eqref{def-D-optimal}. A  tensorized version of this result holds for the unit box $[-1,1]^d$. 
Interestingly, a design on Chebyshev points of $[-1,1]$ has been shown to be $\emph{\c}$-optimal\footnote{Given a vector $\c$, a design is $\c$-optimal if it minimizes $\phi\mapsto \sup_\v\c^T\v/\v^T\M_n(\phi)\v$ see e.g. \cite{studden-1}.} in \cite{hoel}; see also
\cite{studden-1} for optimal design on Chebyshev points.

Finally, we extend the variant \eqref{D-variant} to the case of arbitrary compact basic semi-algebraic sets $S\subset\R^d$. Of course for such general sets $S$, 
an optimal solution $\phi^*\in\mathscr{P}(S)$
of \eqref{D-variant} is not available in closed form any more. However, links with the equilibrium measure are still available asymptotically as the degree $n$ increases. Moreover the two-step design algorithm provided in \cite{dghl19} to solve
\eqref{def-D-optimal} can be easily adapted. In the convex relaxation defined in step 1
of the algorithm, it suffices to replace 
the log-det criterion of \eqref{def-D-optimal} with that of \eqref{D-variant}.
Then step 2 of the algorithm remains exactly the same. 

So in summary, for the three special geometries (unit ball,  unit box, and simplex), and for every dimension $d$ and every degree $n$, the equilibrium measure of $S$ is 
an optimal solution of the variant \eqref{D-variant} of D-optimal design. 
%Hence computing an optimal design reduces to computing a cubature for the equilibrium measure, with positive weights, support on $S$, and exact up to degree $2n$.
Moreover, for more general semi-algebraic sets $S$,
this variant also provides (asymptotically) connections with the equilibrium measure of $S$. It still remains to investigate how this variant compares with the classical version \eqref{def-D-optimal} from a statistical point of view.

\section{Notation, definitions and preliminaries}
\subsection{Notation and definitions}

Let $\R[\x]_n \subset \R[\x]$ denote the space of polynomials 
in the variables $\x=(x_1,\ldots,x_d)$, of total degree at most $n$. Let $\Sigma[\x]\subset\R[\x]$ be the set of sums of squares (SOS) polynomials and 
let $\Sigma[\x]_n\subset\R[\x]_{2n}$
be its subset of SOS polynomials of total degree 
at most $2n$. We also denote by $\1\in\R[\x]$, the constant polynomial with value $1$.
Let $\N$ be the set of natural numbers and 
$\N^d_n:=\{\balpha\in\N^d: \vert\balpha\vert\:(=\sum_i\alpha_i )\,
\leq n\}$, and $s_n:={d+n\choose d}$.
A polynomial $p\in\R[\x]_n$ reads
\[\x\mapsto p(\x)\,=\,\sum_{\balpha\in\N^d_n}p_{\balpha}\,\x^{\balpha}\,,\]
where $\p=(p_{\balpha})\in\R^{s_n}$ is the vector of coefficients of $p$ in the monomial basis $\v_n(\x):=(\x^{\balpha})_{\balpha \in \N^d_n}$ with $\x^{\balpha}:=\prod_{i=1}^d x^{\alpha_i}_i$.
With $S\subset\R^d$ compact, denote by $\mathscr{M}(S)_+$ the convex cone of Borel (positive) 
measures on $S$ and $\mathscr{P}(S)\subset\mathscr{M}(S)_+$
its subset of probability measures on $S$. Denote also by $\mathscr{C}(S)$ the space of continuous functions on $S$.
%With a given real sequence $\bphi=(\phi_{\balpha})_{\balpha\in\N^d}$ is associated the 
%Riesz linear functional $\phi\in\R[\x]^*$ defined by
%\[p\mapsto \phi(p)\,:=\,\sum_{\balpha\in\N^d} p_{\balpha}\,\phi_{\balpha}\,,\quad\forall p\in\R[\x]\,.\]

Given  a set of $s_n$ points $\{\x_1,\ldots,\x_{s_n}\}\subset S$
denote by $\VDM(\x_1,\ldots,\x_{s_n})\in\R^{s_n\times s_n}$ the Vandermonde matrix associated with $\{\x_1,\ldots,\x_{s_n}\}$ {and the monomial basis}. %Let $\R^d_{++}:=\{\,\x\in\R^d: \x>0\,\}$.

\paragraph{Moment and localizing matrix}
{With a given real sequence $\bphi=(\phi_{\balpha})_{\balpha\in\N^d}$ (in bold), and $n\in\N$,
is associated the Riesz linear functional $\phi\in\R[\x]_n^*$ (not in bold) defined by:
\[p\:(=\sum_{\balpha} p_{\balpha}\,\x^{\balpha})\quad\mapsto\quad \phi(p)\,=\,\sum_{\balpha\in\N^d_n}p_{\balpha}\,
\phi_{\balpha}\,,\quad\forall p\in\R[\x]_n\,.\]}
The moment matrix $\M_n(\bphi)$ (or $\M_n(\phi)$) associated with $\bphi$
is the real symmetric matrix with rows and columns indexed by $\N^d_n$, and with entries
\[\M_n(\bphi)(\balpha,\bbeta)\,:=\,\phi_{\balpha+\bbeta}\,,\quad\balpha,\bbeta\in\N^d_n\,.\]
If $\bphi$ has a representing measure $\phi$ {(i.e., 
$\phi_{\balpha}=\int \x^{\balpha}d\phi$ for all $\balpha\in\N^d$)} then 
$\M_n(\bphi)\succeq0$ for every $n$. Conversely, $\M_n(\bphi)\succeq0$ for every $n$, is a necessary but not sufficient condition for $\bphi$ to
have a representing measure on $\R^d$.

Given a polynomial $g\in\R[\x]$, $\x\mapsto g(\x):=\sum_{\bbeta} g_{\bbeta}\,\x^{\bbeta}$, and a real sequence $\bphi=(\phi_{\balpha})_{\balpha\in\N^d}$, the localizing matrix 
$\M_n(g\,\bphi)$ associated with $g$ and $\bphi$
is the moment matrix associated with the Riesz linear functional
$g\phi\in\R[\x]^*_n$ defined by:
\[p\mapsto (g\,\phi)(p)\,=\,\phi(g\,p)\,,\:\forall p\in\R[\x]_n\,;\quad (g\bphi)_{\balpha}\,:=\,\sum_{\bbeta\in\N^d}g_{\bbeta}\,\phi_{\bbeta+\balpha}\,,\quad\forall \balpha\in\N^d_n\,.\]
If $g=\1$, the localizing matrix $\M_n(g\,\bphi)$  is simply the moment matrix associated with $\bphi$, and if $\bphi$ has a representing measure supported on the set $\{\,\x:\:g(\x)\geq0\,\}$, then $\M_n(g\,\bphi)\succeq0$ for all $n$.

%{\paragraph{Preordering}

%Given polynomials $g_1,\ldots,g_m\in\R[\x]$, the set of polynomials
%\begin{equation}
%\label{preorder}
%T(g_1,\ldots,g_m)\,:=\,\{\:\x\mapsto\sum_{\bvarepsilon\in\{0,1\}^m}\sigma_{\bvarepsilon}(\x)\,\prod_{j=1}^mg_j(\x)^{\varepsilon_j}\,\:\sigma_{\bvarepsilon}\in\Sigma[\x]\:\}\,.
%\end{equation}
%is called a \emph{preordering}. If $S=\{\,\x: g_j(\x)\geq0\,,\:j=1,\ldots,m\,\}$
%then $p\geq0$ on $S$ for every $p\in T(g_1,\ldots,g_m)$.
%Conversely, if $S$ is compact then Schm\"udgen's Positivstellensatz \cite[Theorem 2.13]{lass-book} states that $[\,p\in\R[\x]$ and $p>0$ on $S$ ] $\Rightarrow p\in T(g_1,\ldots,g_m)$.

%In addition, let $g_{\bvarepsilon}:=\prod_{j=1}^m g_j^{\varepsilon_j}$ for every $\bvarepsilon\in\{0,1\}^m$. Then \cite[Theorem 2.44(a)]{lass-book} (the real analysis facet of Schm\"udgen's theorem) states that a real sequence $\bphi=(\phi_{\balpha})_{\balpha\in\N^d}$  has a representing measure on $S$
%if and only if 
%$\M_n(g_{\bvarepsilon}\,\bphi)\succeq0$ for all $\bvarepsilon\in\{0,1\}^m$ and all $n\in\N$.}

\paragraph{Christoffel-Darboux kernel and Christoffel function}
 Given a compact set $S\subset\R^d$ and a Borel measure $\phi$ on $S$ such that $\M_n(\phi)\succ0$ for all $n$, let $(P_{\balpha})_{\balpha\in\N^d}\subset\R[\x]$ be a family of polynomials that are orthonormal w.r.t. $\phi$, i.e.:
 \[\int_S P_{\balpha} P_{\bbeta}\,d\phi\,=\,\delta_{i=j}\,,\quad \forall\balpha,\bbeta\in\N^d\,,\]
 which is guaranteed to exist. Then for every $n$, the kernel
 \[(\x,\y)\mapsto K^\phi_n(\x,\y)\,:=\,\sum_{\balpha\in\N^d_n}P_{\balpha}(\x)\,P_{\balpha}(\y)\,,\quad\forall \x,\y\in\R^d\,,\]
  is called the Christoffel-Darboux (CD) kernel\,, the polynomial $\x \mapsto K^\phi_n(\x,\x)/s_n$ is called the (normalized) CD polynomial,
 and the rational function
 \[\x\mapsto \Lambda^\phi_n(\x)\,=\,1/K^\phi_n(\x,\x)\,,\quad\forall \x\in\R^d\,,\]
 is called the Christoffel function. Alternatively
 \begin{eqnarray}
\label{eq:ch1}
 \Lambda^\phi_n(\x)&=&(\v_n(\x)^T\M_n(\phi)^{-1}\v_n(\x))^{-1}\\
\label{eq:ch2}
&=&
 \min_{p\in\R[\x]_n}\{\,\int_S p^2\,d\phi:\: p(\x)\,=\,1\,\}\,,\quad\forall \x\in\R^d\,.\end{eqnarray}
 {Relation \eqref{eq:ch1} is called the ABC theorem in \cite{Simon2008}
 and for \eqref{eq:ch2} see e.g. \cite[Theorem 3.1]{acm}.}
 
 \paragraph{Equilibrium measure}
 The notion of equilibrium measure associated with a given set originates from logarithmic potential theory 
(working with a compact set $E\subset\mathbb{C}$ in the univariate case). It minimizes the energy functional
\begin{equation}
\label{logarithmic}
I(\phi)\,:=\,\int\int \log{\frac{1}{\vert z-t\vert}}\,\
d\phi(z)\,d\phi(t)\,,
\end{equation}
over all Borel probability measures $\phi$ supported on $E$. For instance if $E$ is the interval $[-1,1]\subset\C$ then the arcsine (or Chebyshev) distribution 
$\mu=\d x/\pi\sqrt{1-x^2}$ is an optimal solution. 
Generalizations have been obtained in 
the multivariate case via pluripotential theory in $\mathbb{C}^d$. In particular, if $E\subset\R^d\subset\mathbb{C}^d$ 
is compact then its equilibrium measure (let us denote it by $\mu$) is equivalent to the Lebesgue measure 
on compact subsets of $\mathrm{int}(E)$. It has an even explicit expression if $E$ is convex and symmetric 
about the origin; see e.g. \cite[Theorems 1.1 and 1.2]{bedford}. Several examples of sets $E$ 
with its equilibrium measure given in explicit form can be found in \cite{Baran}. 
Importantly, the appropriate approach to define the (intrinsic) equilibrium measure $\mu$ of a compact subset of $\R^d$ with $d>1$, is to consider $\R^d$ as a subset of $\C^d$ and invoke pluripotential theory with its tools from complex analysis (in particular, plurisubharmonic functions ({and their regularizations}) and the Monge-Amp\`ere operator). 
For more details on equilibrium measures and pluripotential theory, the interested reader is referred to \cite{Baran,bedford,klimek}, the discussion in \cite[Section 6.8, p. 297]{Kirsch} as well as \cite[Appendix B]{Saff-Totik}, \cite{Levenberg-survey}, and the references therein.  In the sequel, when we speak about the equilibrium measure of  a compact subset $E\subset\R^d$, we refer to that in pluripotential theory (i.e., with $E$ considered as a subset of $\C^d$).

\subsection{Background on approximate D-optimal design}
The approximate D-optimal design problem is well-known and originates in  statistics.  Let $S\subset\R^d$ be a  compact set with nonempty interior, and let
$n\in\N$ be fixed. For a design $\bxi=(\bxi_1,\ldots,\bxi_r)$ with positive weights $\bgamma=(\gamma_i)_{i\leq r}$ ($\gamma_i$ is the frequency at which
$\bxi_i$ is chosen)
the matrix $\M_n(\bxi)=\sum_{i=1}^r \gamma_i\,\v_n(\bxi_i)\v_n(\bxi_i)^T$ 
is called 
the information matrix of the design $\bxi$. {Among several statistical criteria  in parameter estimation, maximizing 
(over all such atomic designs) the logarithm of the determinant
of the information matrix is a popular one. Optimizing the same criterion over \emph{all} probability measures on $S$
yields problem
\eqref{def-D-optimal}, and as one optimizes over $\mathscr{P}(S)$, an optimal solution is called an \emph{approximate} D-optimal design. For a general overview on optimal experimental design the interested reader is referred to the seminal papers \cite{kief-wolf} and the recent tutorial \cite{acta}.}

If $S$ has nonempty interior then it is shown in \cite[Theorem 1]{dghl21} that \eqref{def-D-optimal} has  an optimal (not necessarily unique) atomic measure supported on 
$m$ points $\x_j\in S$, $j=1,\ldots,m$, where 
$s_n\leq m\leq s_{2n}$. That is, there exists a weight vector $0<\bgamma\in\R^m$ such that
\[\phi^*\,=\,\sum_{j=1}^m \gamma_j\,\delta_{\x_j}\,,\quad \sum_{j=1}^m \gamma_j=1\,.\]
Next, with $K^{\phi^*}_n(\x,\y)$ being the CD kernel 
associated with $\phi^*$,
\begin{eqnarray}
\label{a1}
s_n-K^{\phi^*}_n(\x,\x)&\geq&0\,,\quad\forall \x\in\,S\\
\label{a2}
s_n-K^{\phi^*}_n(\x_j,\x_j)&=&0\,,\quad\forall j=1,\ldots,m\,.
\end{eqnarray}
See e.g. the equivalence theorem \cite[Theorem, p. 364]{kief-wolf} and \cite{Bloom}. 

While an optimal atomic measure is not necessarily unique, the resulting optimal
moment matrices $\M_n$ are all identical (by strict concavity of the criterion).
It turns out that from \eqref{a1}-\eqref{a2} 
%the characterization of an optimal atomic solution $\phi^*$ 
one  may also link \eqref{def-D-optimal} 
with approximation theory, orthogonal polynomials, Fekete points and Fej\'er points.

A set of $s_n$ points $\{\x^*_1,\ldots,\x^*_{s_n}\}\subset S$ is  a Fekete set if it maximizes 
$\mathrm{det}(\VDM(\x_1,\ldots,\x_{s_n}))$ 
(the Vandermonde determinant)
among all sets of $s_n$ points of $S$.  Observe that  if $\mu$
is  the $s_n$-atomic measure with equal weights  
 $\frac{1}{s_n}\sum_{i=1}^{s_n}\delta_{\x_i}$,
 then one obtains 
{\[\M_n(\mu)\,=\,\frac{1}{s_n}\VDM(\x_1,\ldots\x_{s_n})\VDM(\x_1,\ldots\x_{s_n})^T\,,\]}
and {$\log\,\mathrm{det}(\M_n(\mu))=2\log\,\mathrm{det}(\VDM(\x_1,\ldots\x_{s_n}))-s_n\log s_n$.}

Fekete points are in turn connected with the so-called \emph{equilibrium measure}
in pluri-potential theory \cite{klimek}. For instance 
the sequence of discrete probability measures 
$(\nu_n)_{n\in\N}$ equi-supported on $s_n$ Fekete points converges 
to the equilibrium measure of $S$ for the weak-star topology on the space
of signed measures on $S$.  See e.g. \cite{berman}, \cite[Theorem 4.5.1]{book}, and the references therein.

A set of $s_n$ points $\{\x_1,\ldots,\x_n\}\subset S$ is  a Fej\'er  set
if  {$\max_{\x\in S}\sum_{j=1}^N |\ell_j(\x)|^2=1$}, where $\ell_1,\ldots,\ell_{s_n}$ 
are  the Lagrange interpolation polynomials {associated with the points $\x_1,\ldots\x_{s_n}$ and the space $\R[\x]_n$}. It turns out that a set of Fej\'er points is also a  set of Fekete points, {whereas the reverse is not true in general \cite{Bos-1}.}  Hence it is natural to ask when a set of Fekete points is
also the support of an atomic measure $\phi^*$ 
in \eqref{a1}-\eqref{a2}, with equal weights.
In fact, an $s_n$-atomic
measure is D-optimal in \eqref{def-D-optimal} if and only if it is equally weighted and 
its support is a Fej\'er set. See for instance \cite{Bos-1,Bos-2}. However, and as noted in \cite{Bos-2}, such a situation is rather exceptional and not to be expected.

{
\subsection{Convex relaxations}
\label{algo-description}
As stated, problem \eqref{def-D-optimal} is intractable because one does not know 
how to optimize over $\mathscr{P}(S)$ for arbitrary sets $S\subset\R^d$. 
A typical numerical approach is to discretize the design space $S$.
%({\color{blue} quid des m\'ethodes existantes?})
However, if $S$ is the compact basic semi-algebraic set 
$\{\,\x\in\R^d: g_j(\x)\geq0\,,\: j=1,\ldots,m\,\}$ with $g_j\in\R[\x]$ for all $j$,
then the authors in \cite{dghl19} have proposed a two-step numerical procedure with
good results in a number of cases. In particular, this numerical procedure 
is mesh-free as it avoids discretizations of the design set $S$.

Letting $g_0=\1$, associated with \eqref{def-D-optimal}
is the following convex optimization problem:
\begin{equation}
\label{mom-conv-relax}
\max_{\bphi\in\R^{s_{2n}}}\,\{\,\log\mathrm{det}\,\M_n(\bphi)\,:\: \phi(\1)=1\,;\: 
\M_{n-r_j}(g_j\,\bphi)\,\succeq\,0\,,\:j=0,\ldots,m\,\}\,,
\end{equation}
where the maximization is over real sequences of pseudo-moments $\bphi=(\phi_{\balpha})_{\balpha\in\N^d_{2n}}$, that is, sequences which do not necessarily have a representing probability measure $\phi$ on $S$. For this reason \eqref{mom-conv-relax} is a \emph{relaxation} of \eqref{def-D-optimal}. {Indeed the probability measures that solve \eqref{def-D-optimal} form an equivalence class of measures with identical moments up to order $2n$, and so in a first step one may instead optimize 
over the set of moments up to order $2n$. As this set does not have a tractable characterization, one next
relaxes the problem to \eqref{mom-conv-relax} over pseudo-moments $\bphi\in\R^{s_{2n}}$ that satisfy
necessary conditions to be true moments.}

Then in the two-step algorithm proposed in \cite{dghl19} to solve \eqref{def-D-optimal}: 

-- Step-1 solves the relaxation \eqref{mom-conv-relax}, a convex optimization problem which can be solved efficiently (at least for reasonable dimensions), and one expects to obtain an optimal sequence $\bphi^*=(\phi^*_{\balpha})_{\balpha\in\N^d_{2n}}$ having a representing measure on $S$. That is,
one expects to obtain a sequence $\bphi^*$ of true moments instead of pseudo-moments.

--  Step-2 of the procedure consists of finding a {\sl flat extension} $\bpsi$ of the vector $(\phi^*_{\balpha})_{\balpha\in\N^d_{2n}}$, which is a vector of pseudo-moments $(\psi_{\balpha})_{\balpha\in\N^d_{2t}}$ with $t>n$ such that $\psi_{\balpha}=\phi^*_{\balpha}$ for all $\balpha \in \N^d_{2n}$ and $\text{rank}\:\M_t(\psi)=\text{rank}\:\M_{t-s}(\psi)=:r$ for some positive integer $s$. Then from this flat extension, one can use the algorithm of \cite[Section 6.1.2]{lass-book} to extract $r$ points of $S$, which provide a D-optimal design (the support of a D-optimal atomic measure $\nu_n\in\mathscr{P}(S)$). If $\bphi^*$ has indeed a representing measure on $S$,
then a flat extension can be obtained by solving a hierarchy of semidefinite programs for increasing values of $t=n+1,n+2,\ldots$, by minimizing (with respect to the vector of pseudo-moments $(\psi_{\balpha})_{\balpha\in\N^d_{2t}}$) the linear functional
$\psi(p)$ for a random strictly positive polynomial $p$, 
subject to the constraints $\M_{t-r_j}(\psi) \succeq 0$, $\forall j=0,\ldots,m$, until the condition $\text{rank}\:\M_t(\psi)=\text{rank}\:\M_{t-s}(\psi)$ is satisfied for some positive integer $s$. 

For more details and examples, the interested reader is referred to \cite[Section 5]{dghl21}.}

\section{Main result}
\label{geometry}
As we next see, the main result of this paper is obtained from \cite{lass-cras,lass-tams} 
and it is detailed for each of the three cases considered (unit ball, unit box, canonical simplex).
What we emphasize here is the significance of such results for the D-optimal design problem. Namely, by using results from \cite{lass-tams}, one shows that our proposed variant of the D-optimality criterion for optimal design, 
reveals quite strong links with the equilibrium measure $\phi^*$ of $S$.
In this variant appears a term which  involves the boundary $\partial S$ of the design space $S$. As a result, 
for three important sets $S\subset\R^d$, and for all dimensions $d$ and all degrees $n$,  
$\phi^*$ is an optimal solution of \eqref{D-variant}. Therefore an optimal design can be obtained from any cubature 
associated with $\phi^*$, provided that it has positive weights, atoms in $S$, and is exact up to degree $2n$.
{Define :

-- the Euclidean unit ball $S^{O}:=\{\,\x\in\R^d:\:1-\Vert\x\Vert^2\geq0\,\}$, with associated equilibrium measure
{
\[\phi^{O}\,=\,\frac{d\x}{\pi^d\,\sqrt{1-\Vert\x\Vert^2}}\,,\]}

-- the unit box $S^{\square}:=\{\,\x\in\R^d:\:1-x_j^2\geq0\,,\: j=1,\ldots,d\,\}$, 
with associated equilibrium measure
{
	\[\phi^{\square}\,=\,\frac{d\x}{\pi^d\,\sqrt{(1-x_1^2)\cdots (1-x_d^2)}}\,,\]}

-- and the canonical simplex
$S^{\triangle}:=\{\,\x\in\R^d_+:\:1-\sum_{j=1}^d x_j\geq0\,\}$
with associated equilibrium measure
{
	\[\phi^{\triangle}\,=\,\frac{d\x}{\pi^d\,\sqrt{x_1\cdots x_d\cdot(1-\sum_{j=1}^d x_j)}}\,.\]}
For every integer $n$, define following subsets of  $\R[\x]$:
\begin{enumerate}
\item $G_n^{O}:=\{\,\x\mapsto 1\,;\quad \x\mapsto 1-\Vert\x\Vert^2\,\}$.
\item $G_n^{\square}:=\{\,\x\mapsto \displaystyle\prod_{j=1}^d(1-x_j^2)^{\varepsilon_j}\,:\: \bvarepsilon\in\{0,1\}^d\,;\: \vert\bvarepsilon\vert\leq n\,\}$.
\item $G_n^{\triangle}:=\{\,\x\mapsto x_1^{\varepsilon_1}\cdots x_d^{\varepsilon_d}(1-\sum_{j=1}^d  x_i)^{\varepsilon_{d+1}}\,:\: \bvarepsilon\in\{0,1\}^{d+1}\,;\:\vert\bvarepsilon\vert\in 2\N \,;\:\vert\bvarepsilon\vert\leq n/2\,\}$,
\end{enumerate}
associated respectively with $S^{O}$, $S^{\square}$, and $S^{\triangle}$. Of course one also has:
\[S^{O}\,=\,\{\,\x: g(\x)\geq0\,,\:\forall g\in G_n^{O}\,\}\,;\quad
S^{\square}\,=\,\{\,\x: g(\x)\geq0\,,\:\forall g\in G_n^{\square}\,\}\,;\quad
S^{\triangle}\,=\,\{\,\x: g(\x)\geq0\,,\:\forall g\in G_n^{\triangle}\,\}\,.\]
{Notice that the sets $G_n^{\square}$ and $G_n^{\triangle}$ do not form \emph{minimal} sets of
generators for the sets $S^{\square}$ and $S^{\triangle}$, respectively. With minimal sets, we do not know whether our results are still valid.}

Next, fix $n\in\N$, $S^{\star}:=S^{O}, S^{\square}$, or $S^{\triangle}$, and 
instead of \eqref{def-D-optimal}, consider now the related optimization problem:
\begin{equation}
 \label{def-new}
 \rho_n=\max_{\phi\in\mathscr{P}(S^{\star})} \quad\sum_{g\in G^{\star}_n}\log\det\, \M_{n-d_g}(g\,\phi)
 \end{equation}
or equivalently,
\begin{equation}
 \label{def-new-min}
 -\rho_n=\min_{\phi\in\mathscr{P}(S^{\star})} \quad\sum_{g\in G^{\star}_n}\log\det\, \M_{n-d_g}(g\,\phi)^{-1}\,,
\end{equation}
where for every $g\in\R[\x]$, $d_g:=\lceil\mathrm{deg}(g)/2\rceil$. 

When comparing \eqref{def-new} with \eqref{def-D-optimal} observe that 
we have simply replaced the information matrix $\M_n(\phi)$ with the new 
block-diagonal information matrix with $\vert G^{\star}_n\vert$ blocks, and
where each diagonal block is the localizing matrix
$\M_{n-d_g}(g\,\bphi)$, $g\in G^{\star}_n$. For instance if $\star=O$ (i.e., $S^{\star}(=S^{O})$ is the Euclidean unit ball), the localizing matrix has two diagonal blocks and reads:
\[\left[\begin{array}{cc}\M_n(\bphi)&0\\
0&\M_{n-1}(g\,\bphi)\end{array}\right]\quad\mbox{with $g(\x)=1-\Vert\x\Vert^2$,}\,\quad n\,\in\,\N\,.\]
%associated with the measures $\phi$ and $g\phi$. 
The latter information matrix takes into account the 
distance to the boundary 
$\partial S^{O}=\{\x : \Vert\x\Vert=1\}$ of the design space $S^{O}$. Indeed, for a design $\bxi=(\bxi_1,\ldots,\bxi_r)$
with weights $\bgamma=(\gamma_i)_{i\leq r}$, the information matrix $\M_{n-1}(g\,\nu_{\bxi})$
(with $\nu_{\bxi}=\sum_i\gamma_i\delta_{\bxi_i}$) reads
\[\M_{n-1}(g\,\nu_{\bxi})\,=\,\sum_{i=1}^r \gamma_i\,(1-\Vert\bxi_i\Vert^2)\,\v_{n-1}(\bxi_i)\v_{n-1}(\bxi_i)^T\,,\]
and so points $\bxi_i\in\partial S^{O}$ do \emph{not} contribute to $\M_{n-1}(g\nu_{\bxi})$. So the criterion
in \eqref{def-new} put more weight on points in $\mathrm{int}(S^{\star})$ than on points in $\partial S^{\star}$.

Next, the analogue for \eqref{def-new}  of the convex relaxation \eqref{mom-conv-relax} for \eqref{def-D-optimal}
reads:
\begin{equation}\label{def-new-relax}
\tau_n=\max_{\bphi}\,{\left\{ \,\sum_{g\in G^\star_n} \log\mathrm{det}\,\M_{n-d_g}(g\,\bphi):\:\phi(\1)=1\,;\:\M_{n-d_g}(g\,\bphi)\succeq0\,,\: \forall g\in G^\star_n\,\right\}}\,,
\end{equation}
where the maximization is over vectors
$\bphi=(\phi_{\balpha})_{\balpha\in\N^d_{2n}}$ of pseudo-moments.
We also consider  the convex optimization problem :
\begin{equation}
 \label{def-new-dual}
% \begin{array}{rl}
\tau^*_n\,=\, \displaystyle\max_{\A_g\succeq0} \:{\left\{\,\displaystyle\sum_{g\in G^{\star}_n}\log\det\, \A_g:\:
 \displaystyle\sum_{g\in G^{\star}_n}s_{n-d_g}\,=\,\
 \displaystyle\sum_{g\in G^{\star}_n}\v_{n-d_g}(\x)^T\A_g\v_{n-d_g}(\x)\,g(\x)\,,\quad\forall\x\in\R^d\,\right\}}\,.
 %\end{array}
\end{equation}
As proved in \cite[Theorem 6, p. 945]{lass-cras}, \eqref{def-new-dual} is a dual of \eqref{def-new-relax}
and also of \eqref{def-new-min}, that is,
weak duality $\tau^*_n\leq -\tau_n\leq-\rho_n$ holds, and in fact even strong duality holds\footnote{In the present context, the condition $\1\in Q_n(G)$ 
in \cite[Theorem 6]{lass-cras} is satisfied as soon as the constraint in 
\eqref{def-new-dual} is satisfied for some
matrices $\A_g\succ0$, for every $g\in G^\star_n$.} , i.e., $\tau^*_n=-\tau_n=-\rho_n$. So the convex relaxation \eqref{def-new-relax} is in fact exact (i.e. has same optimal value as \eqref{def-new-min}). 

\begin{theorem}
\label{th1}
 Let $n\in\N$ be fixed, arbitrary, and $S^{\star}=S^{O}$, $S^{\square}$, or $S^{\triangle}$ (fixed). Then
 
 (i)  The equilibrium measure  $\phi^{\star}$ of $S^{\star}$ is an optimal solution of \eqref{def-new}, the 
 optimal moment matrices  $\M_{n-d_g}(g\,\phi^{\star})$, $g\in G^\star_n$, are  unique, and the vector of moments $\bphi^\star=(\phi^\star_{\balpha})_{\balpha\in\N^d_{2n}}$ is the unique optimal solution of 
 \eqref{def-new-relax}.

 (ii) The unique optimal solution $(\A_g^\star)_{g\in G^\star_n}$  of problem \eqref{def-new-dual}
 satisfies $\A_g^\star=\M_{n-d_g}(g\,\phi^\star)^{-1}$, 
 %$\A_1^*=\M_{n-1}(g\phi^*)^{-1}$, 
 and therefore
  \begin{equation}
  \label{put-design-ball}
  \displaystyle\sum_{g\in\,G^\star_n}s_{n-g}\,=\,\displaystyle\sum_{g\in\,G^\star_n}
 \Lambda^{g\,\phi^\star}_{n-d_g}(\x)^{-1}\,g(\x)\,=\,\displaystyle\sum_{g\in\,G^\star_n}
 K^{g\,\phi^\star}_{n-d_g}(\x,\x)\,g(\x)\,,\quad\forall \x\in\R^d\,.
 \end{equation}

 (iii) There are $s_n\leq m\leq s_{2n}$ distinct points $\x^*_j\in S^\star$, and a positive vector 
 $0<\bgamma\in\R^m$ such that  the atomic probability measure $\nu^\star_n:=\sum_{j=1}^m \gamma_j\,\delta_{\x^*_j}$ 
 has the same moments as $\phi^\star$, up to degree $2n$. The normalized CD polynomial
 \begin{equation}
 \label{p-star-ball}
 \x\mapsto K^{\phi^\star}_n(\x,\x)
 \end{equation}
 attains its maximum $\sum_{g\in G^\star_n}s_{n-d_g}$  at all points of the boundary $\partial S^\star$ if $S^\star$ is the Euclidean unit ball, 
 only at the $2^d $ vertices %$\x=(\pm 1,\ldots,\pm 1)$ 
 of $\partial S^\star$,
 if $S^\star$ is the unit box, and 
 only at the $d+1$ vertices of $S^\star$%at all vertices of $S^\star$ 
 if $S^\star$ is the canonical simplex.

 (iv) With $\nu^\star_n$ as in (iii), the sequence $(\nu^\star_n)_{n\in\N}$ 
 converges to $\phi^\star$ for the weak-star topology of $\mathscr{M}(S^\star)$, i.e.,
 \[\lim_{n\to\infty}\int_S f\,d\nu^\star_n\,=\,\int_S f\,d\phi^\star\,,\quad\forall f\in\mathscr{C}(S^\star)\,.\]
 \end{theorem}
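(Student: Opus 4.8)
The plan is to route the entire proof through the generalized polynomial Pell identity satisfied by the equilibrium measures of the three domains, which is the content imported from \cite{lass-cras,lass-tams}; once that identity is in hand, parts (i)--(iv) reduce to convex duality (for which strong duality $\tau^*_n=-\tau_n=-\rho_n$ is already granted), strict concavity of $\log\det$, Richter--Tchakaloff, and a standard moment-convergence argument. Concretely, the one fact I would import is that for $S^\star=S^O,S^\square,S^\triangle$ the equilibrium measure $\phi^\star$ satisfies, for all $\x\in\R^d$,
\[
\sum_{g\in G^\star_n} K^{g\phi^\star}_{n-d_g}(\x,\x)\,g(\x)\;=\;\sum_{g\in G^\star_n}s_{n-d_g}.
\]

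For (i) and (ii) I would exhibit a matched primal--dual pair. The moment vector $\bphi^\star$ of $\phi^\star$ is feasible for \eqref{def-new-relax} because $\phi^\star\in\mathscr{P}(S^\star)$, so $\phi^\star(\1)=1$ and each localizing matrix $\M_{n-d_g}(g\,\phi^\star)$ is positive definite. Dually, put $\A_g^\star:=\M_{n-d_g}(g\,\phi^\star)^{-1}$; by \eqref{eq:ch1} one has $\v_{n-d_g}(\x)^T\A_g^\star\v_{n-d_g}(\x)=K^{g\phi^\star}_{n-d_g}(\x,\x)$, so the displayed Pell identity is \emph{exactly} the equality constraint of \eqref{def-new-dual}, i.e. $(\A_g^\star)$ is dual feasible. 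The primal value at $\bphi^\star$ is $P:=\sum_g\log\det\M_{n-d_g}(g\,\phi^\star)$ and the dual value at $(\A_g^\star)$ is $\sum_g\log\det\A_g^\star=-P$; since $P\le\tau_n$ (primal feasibility) and $-P\le\tau^*_n=-\tau_n$ (weak duality plus strong duality), the two inequalities sandwich to $P=\tau_n=\rho_n$, so both candidates are optimal. This shows $\phi^\star$ is optimal for \eqref{def-new} (the relaxation being exact) and that $\A_g^\star=\M_{n-d_g}(g\,\phi^\star)^{-1}$ is dual optimal, which is precisely \eqref{put-design-ball} after rewriting $K=1/\Lambda$. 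For uniqueness I would use that the constant generator $\1$ lies in every $G^\star_n$: the corresponding block is $\M_n(\bphi)$, whose entries exhaust all moments up to degree $2n$, so $\bphi\mapsto\log\det\M_n(\bphi)$ is strictly concave on $\R^{s_{2n}}$; the remaining terms being concave, the objectives of \eqref{def-new-relax} and \eqref{def-new-dual} are strictly concave and their optima $\bphi^\star$, $(\A_g^\star)$ (hence the optimal moment matrices) are unique.

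For (iii), the atomic representing measure $\nu^\star_n$ with $s_n\le m\le s_{2n}$ atoms and positive weights is produced by Richter--Tchakaloff, as in \cite[Theorem 1]{dghl21}. The location of the maximum of $K^{\phi^\star}_n(\x,\x)$ I would read off the Pell identity by isolating the $g=\1$ term,
\[
K^{\phi^\star}_n(\x,\x)\;=\;\sum_{g\in G^\star_n}s_{n-d_g}\;-\;\sum_{g\in G^\star_n,\,g\neq\1}K^{g\phi^\star}_{n-d_g}(\x,\x)\,g(\x).
\]
On $S^\star$ every $g\ge0$ and every $K^{g\phi^\star}_{n-d_g}(\x,\x)>0$, whence $K^{\phi^\star}_n\le\sum_g s_{n-d_g}$, with equality exactly on the common zero set in $S^\star$ of the nonconstant generators. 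A finite case-check then finishes: for the ball the single extra generator $1-\Vert\x\Vert^2$ vanishes precisely on $\partial S^O$; for the box the generators $1-x_j^2$, $j=1,\dots,d$, all occur and their common zeros in $S^\square$ are the $2^d$ vertices; for the simplex the degree-two generators $x_ix_j$ and $x_i(1-\sum_k x_k)$ all occur and force at most one coordinate to be nonzero and equal to $1$, i.e. the $d+1$ vertices.

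Part (iv) is then routine and where I expect the least trouble. Since $\nu^\star_n$ and $\phi^\star$ share every moment up to degree $2n$, we have $\int_{S^\star}p\,d\nu^\star_n=\int_{S^\star}p\,d\phi^\star$ for all polynomials $p$ with $\deg p\le 2n$; letting $n\to\infty$ and using density of polynomials in $\mathscr{C}(S^\star)$ (Stone--Weierstrass on the compact set $S^\star$) together with the uniform bound on the mass of probability measures, convergence on polynomials upgrades to weak-star convergence. The genuine obstacle throughout is the Pell identity itself: it carries all the domain-specific pluripotential content (the explicit equilibrium measures of ball, box and simplex), and once it is granted from \cite{lass-tams} the rest is convex-duality bookkeeping and a standard approximation argument.
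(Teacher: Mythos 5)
Your proposal is correct and follows essentially the same route as the paper: the domain-specific input is the Pell identity imported from \cite{lass-cras,lass-tams} (exactly what the paper cites for (i)--(ii)), the atomic measure in (iii) comes from \cite[Theorem 1]{dghl21}, and the location of the maximizers of $K^{\phi^\star}_n(\x,\x)$ is obtained by the identical argument of isolating the $g=\1$ term in \eqref{put-design-ball} and finding the common zeros in $S^\star$ of the nonconstant generators. The only departures are cosmetic and both valid: you spell out the primal--dual certification and the strict-concavity uniqueness argument where the paper simply defers to its references, and in (iv) you replace the paper's subsequence-extraction plus moment-determinacy argument with a direct Stone--Weierstrass approximation.
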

 
 \begin{proof}
 For (i) and (ii) see \cite{lass-cras} and
 \cite[Theorem 4.1, 4.3, 4.5]{lass-tams}. For the first statement in (iii) see
 e.g. \cite[Theorem 1]{dghl21}.
 %and  the many references therein.
 For the second statement in (iii), observe that by \eqref{put-design-ball}  one obtains
\[\displaystyle\sum_{g\in G^\star_n}s_{n-d_g}-K^{\phi^\star}_n(\x,\x)\,=\,
\sum_{1\neq g\in G^\star_n}\,K^{g\phi^\star}_{n-d_g}(\x,\x)\,g(\x)\quad\geq\,0\,,\quad\forall \x\in S^\star\,,\]
as $g\geq0$ on $S^\star$ for all $g\in G^\star_n$, and so $K^{\phi^\star}_n(\x,\x)\leq \sum_{g\in G^\star_n}s_{n-d_g}$ for all $\x\in S^\star$.
%Next, integrating w.r.t. $\phi^\star$ yields
%\begin{eqnarray*}\int_{S^\star}
%(\displaystyle\sum_{g\in G^\star_n}s_{n-d_g}-
%K^{g\phi^\star}_{n-d_g}(\x,\x)\,g(\x))\,d\phi^\star\\
%&=&\displaystyle\sum_{g\in G^\star_n}s_{n-d_g}-
%\left\langle\M_{n-d_g}(g\,\bphi^*)^{-1},\int_{\S^\star}\v_{n-d_g}(\x)\,\v_{n-d_g}(\x)^T\,g(\x)\,d\phi^\star\right\rangle\\
%&=&\displaystyle\sum_{g\in G^\star_n}s_{n-d_g}-
%\left\langle\M_{n-d_g}(g\,\bphi^*)^{-1},\M_{n-d_g}(g\,\bphi^*)\right\rangle\,=\,0\,,
%\end{eqnarray*}
%and  so 
%\[\displaystyle\sum_{g\in G^\star_n}s_{n-d_g}\,=\,\displaystyle\sum_{g\in G^\star_n}K^{g\,\phi^\star}_{n-d_g}(\x,x)\,g(\x)\,,\quad\phi^*{a.e.}\]

Next, recall that if $g=\1$ then $K^{g\,\phi^\star}_{n-d_g}(\x,\x)=K^{\phi^\star}_n(\x,\x)$. Hence
if $S^\star=S^O$, then $K^{\phi^\star}_n(\x,\x)=s_n+s_{n-1}$ 
for every $\x\in\partial S^O$ as $\Vert\x\Vert=1$ on $\partial S^O$. On the other hand, if $S^\star=S^\square$, observe that only at the $2^d$ points $\u=(\pm 1,\pm 1,\ldots,\pm 1)$
$g(\u)=0$ for every %vertex $\u$ of $[-1,1]^d$ and every 
$g\in G^\square_n$ with $g\neq1$.
Similarly, if $S^\star=S^\triangle$ then only at the vertices $\u$ of $S^\star$,
$g(\u)=0$ %only at the origin  $0\neq\u$ of $S^\triangle$ and every 
for every $g\in G^\triangle_n$ with $g\neq1$. This yields the desired conclusion.

Finally for (iv), as $S^\star$ is compact, there exists a subsequence $(n_k)_{k\in\N}$ and a probability measure $\psi$ on $S$ such that $\lim_{k\to\infty}\int_S f\,d\nu^\star_{n_k}=\int_{S^\star} f\,d\psi$ for all $f\in\mathscr{C}(S^\star)$. In particular, by definition of $\nu^\star_n$,
\[\phi^\star_{\balpha}\,=\,\lim_{k\to\infty}\int_{S^\star}\x^{\balpha}\,d\nu^\star_{n_k}\,=\,\int_{S^\star}\x^{\balpha}\,d\psi\,,\quad\forall\balpha\in\N^d\,,\]
and so as $S^\star$ is compact, $\psi=\phi^\star$. But this also implies that the whole sequence $(\nu^\star_n)_{n\in\N}$  converges to $\phi^\star$.
\end{proof}

%\begin{center}
\emph{
Hence when $S^\star$ is the unit ball, the unit box, or the simplex, its equilibrium measure $\phi^\star$ is an optimal solution of \eqref{def-new} for \emph{all} degrees $n$. Therefore the support of any available cubature for $\phi^\star$, with atoms in $S^\star$, positive weights, 
and exact up to degree $2n$, provides an optimal design.}
%\end{center}
}

Concerning  the construction of such atomic-measures in 
Theorem \ref{th1}(iii), the interested  reader is referred to \cite{cubature-ball} where several cubature formula are provided for the unit ball and unit sphere for various weight functions (including
the Chebyshev weight $1/\sqrt{1-\Vert\x\Vert^2}$). For numerical computation of cubatures the interested reader is referred to e.g. \cite{cools-1,cools-2,Gautschi}.

Incidentally, by Theorem \ref{th1}, with $s:=\sum_{g\in G^\star_n}s_{n-d_g}$,
the polynomials $(g(\x)\,K^{g\,\phi^\star}_{n-d_g}(\x,\x)/s)_{g\in G^\star_n}$ are nonnegative on $S^\star$,
and provide $S^\star$ with a partition of unity.

\begin{example}
Let $S^\star=S^\circ$ (the Euclidean unit ball). In the case 
$d=2$ (disk) and with $n=2$ (degree 4), 
let us compare the D-optimal designs obtained {with the approach in \cite{dghl21} briefly described in Section \ref{algo-description}, 
%{i.e., solving the convex relaxation \eqref{mom-conv-relax} with pseudo-moments, 
%associated with the original problem \eqref{def-D-optimal}, 
with the approach described in this paper, i.e., solving variant problem \eqref{D-variant}.}
\begin{figure}[h!]
	\begin{center}
		
		\includegraphics[width=\textwidth]{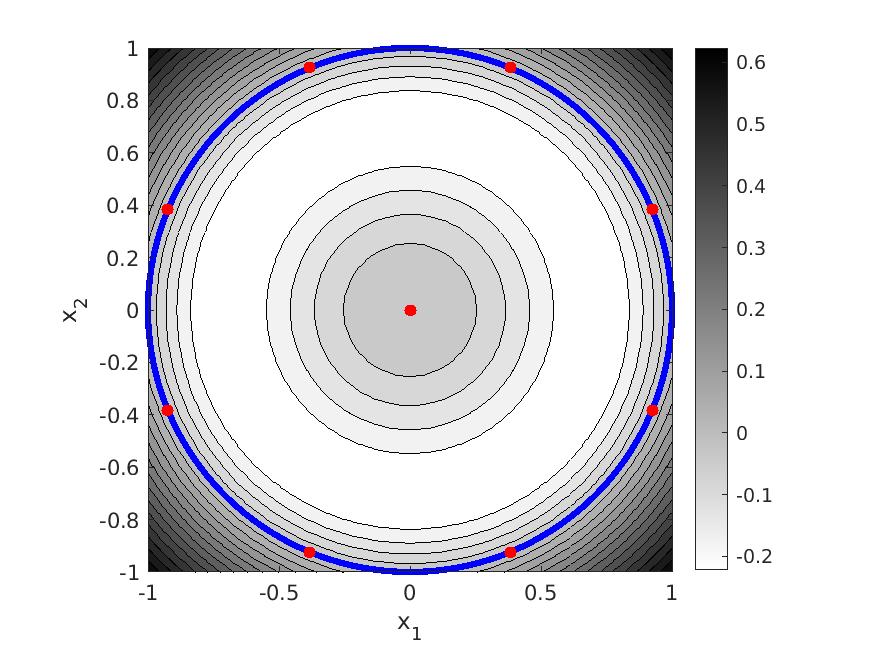}
		\caption{Degree 4 D-optimal design on the disk solving problem \eqref{def-D-optimal}. The optimal points (red) are located on the CD polynomial unit level set (blue).  \label{fig:ball2mom}}
	\end{center}
\end{figure}
\begin{figure}[h!]
	\begin{center}
		\includegraphics[width=\textwidth]{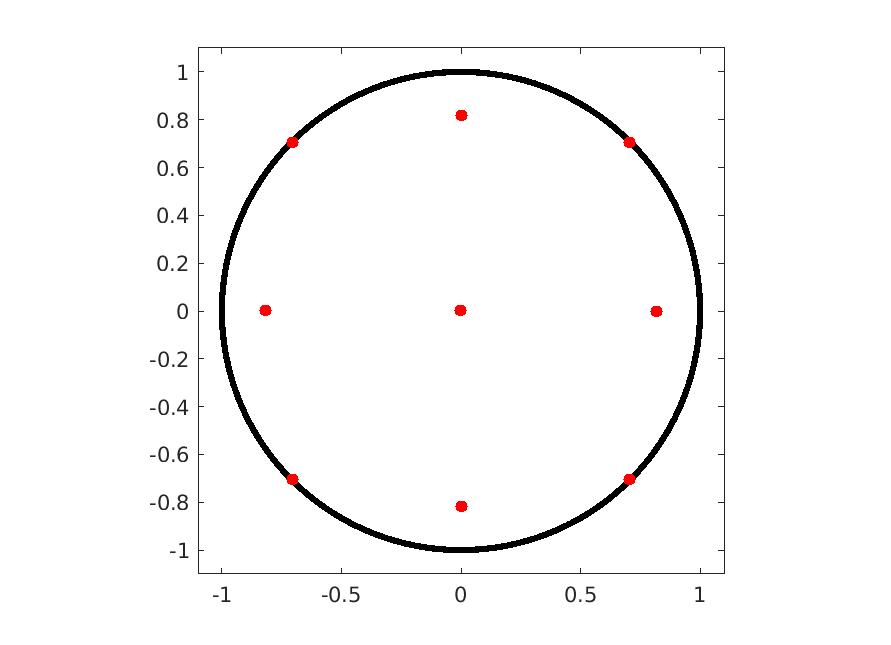}
		\caption{Degree 4 D-optimal design  on the disk (black) with points (red) solving variant problem \eqref{D-variant}.\label{fig:ball2momloc}}
	\end{center}
\end{figure}
{In solving \eqref{mom-conv-relax} at Step-1 of the two-step moment-SOS algorithm \cite{dghl21} (with $S$ described as $\{\x: 1-\Vert\x\Vert^2\geq0\}$ with one generator){,}}
a moment vector\footnote{The unique solution $\bmu^*_4$ is not guaranteed to come from a measure $\mu^*\in\mathscr{P}(S^\star)$, but in our numerical experiments in \cite{dghl21}, it does.} 
\[\bmu^*_4 := \int_{S^\star} \v_4(\x) d\mu(\x) \in \R^{15}\]
is computed numerically. Its non-zero entries are $\mu^*_{00}=1$, $\mu^*_{20}=\mu^*_{02}=0.4167$, $\mu^*_{40}=\mu^*_{04}=0.3125$, $\mu^*_{22}=0.1042$ (to 4 significant digits). In step-2, an atomic measure supported at 9 points is computed at relaxation order 5, i.e. by extending the moment vector up to degree 10. The points are displayed in red on Figure \ref{fig:ball2mom}. They are located on the unit level set (blue) of the CD polynomial. Other level sets (gray) are represented in logarithmic scale.

In solving {variant problem \eqref{D-variant}},
the moment vector $\bphi^\star_{4}\in\R^{15}$ of the equilibrium measure $\phi^\star$ of $S^\star$ 
is computed, either in closed form or numerically as above in step-1 (but with the new $\log\mathrm{det}$
criterion). Its non-zero entries read
$\phi^\star_{00}=1$, $\phi^\star_{20}=\phi^\star_{02}=1/3$, $\phi^\star_{40}=\phi^\star_{04}=1/5$, $\phi^\star_{22}=1/15$.
In step-2, an atomic measure supported on 9 points of $S^\star$ is computed at relaxation order 5, i.e. by extending the moment vector $\bphi^\star_4$ up to degree $10$. The points are displayed in Figure \ref{fig:ball2momloc}. As expected, we observe that 5 out of 9 points are in the interior of the disk, whereas only a single point (the origin) is in interior of the disk for original problem \eqref{def-D-optimal}.
\end{example}
\begin{example}
Let $S^*=S^\triangle$ (the simplex). With $d=2$ and $2n=4$, in solving the standard D-optimal design \eqref{def-D-optimal} via the two-step algorithm in \cite{dghl19} described in Section \ref{algo-description} {(with $S$ described as $\{\,\x: x_1\geq0,\,x_2\geq0\,,1-x_1-x_2\geq0\,\}$
with $3$ generators),}
one obtains 6 points (the three vertices and the mid-points on each facet). On the other hand in solving \eqref{D-variant} one obtains a cubature for the equilibrium measure, supported on $8$ points
on the boundary and one point inside.
\end{example}

\subsection*{The univariate case $S^\square=[-1,1]$}
Let $\phi^\star=dx/\pi\sqrt{1-x^2}$ and $x\mapsto g(x)=1-x^2$. Then
\eqref{put-design-ball} reads:
\begin{equation}
\label{aa1}
s_n+s_{n-1}\,=\,\Lambda^{\phi^\star}_n(x)^{-1}+(1-x^2)\,\Lambda^{g\cdot\phi^\star}_{n-1}(x)^{-1}\,,\quad\forall x\in\R\,,
\end{equation}
or, equivalently:
\begin{equation}
\label{aaa1}
s_n+s_{n-1}\,=\,K^{\phi^\star}_n(x,x)+(1-x^2)\,K^{g\cdot\phi^\star}_{n-1}(x,x)\,,\quad\forall x\in\R\,.\end{equation}
Next, the Gauss-Chebyshev quadrature for the equilibrium measure
$\phi^\star$ is supported on the zeros of the degree-$(n+1)$ Chebyshev polynomial of first kind, and reads
$\nu_n=\frac{1}{n+1}\sum_{i=1}^{n+1} \delta_{x_i}$ with
\begin{equation}
\label{gauss-chebyshev}
 x_i\,=\,\cos\left(\frac{2i-1}{2(n+1)}\pi\right)\in (-1,1)\,,\:i=1,\ldots,n+1\,,\end{equation}
and is exact up to degree $2n+2-1=2n+1$, i.e.,
\[\int_{-1}^1 p\,d\nu_n\,=\,\int_{-1}^1 p\,d\phi^*\,,\quad\forall p\in \R[x]_{2n+1}\,.\]
Moreover it turns out that 
\[K^{\phi^\star}_n(x_i,x_i)\,=\,\Lambda^{\phi^\star}_n(x_i)^{-1}\,=\,\Lambda^{\nu_n}_n(x_i)^{-1}\,=\,n+1=s_n\,,\]
and so by  \eqref{aa1},
$(1-x_i^2)\,\Lambda^{g\cdot\phi^\star}_{n-1}(x_i)^{-1}=s_{n-1}$, for all $i=1,\ldots,n+1$. 

So the distinguished equidistributed  atomic probability measure $\nu_n$  is an optimal solution of \eqref{def-new}. Moreover
\[\frac{\VDM(x_1,\ldots,x_{n+1})\VDM(x_1,\ldots,x_{n+1})^T}{n+1}
\,=\,\M_n(\nu_n)\,.\]
However, its support (zeros of the degree-$(n+1)$ Chebyshev polynomial) is \emph{not} a set of Fekete points because
\[\phi^\star\neq\arg\max_{\mu\in\mathscr{P}(S)}\mathrm{det}(\M_n(\mu))\,,\]
 (i.e., $\phi^\star$ does not solve the D-optimal design\footnote{An approximate D-optimal design for \eqref{def-D-optimal} is equally supported on the zeros of $(1-x^2)P'(x)$ where $P_n$ is the degree-$n$ Legendre polynomial; see \cite{hoel-2}. It is also a set of Fekete points \cite{Bos-2}. } \eqref{def-D-optimal}).
 Indeed $\max_{x\in S}K^{\phi^\star}_n(x,x)>s_n$ because by \eqref{aa1}
\[K^{\phi^\star}_n(x,x)\,\left(\,=s_n+s_{n-1}\right)\,>\,s_n\quad\mbox{ for $x=\pm 1$.}\]
So the points $(x_i)_{i\leq n+1}$ have the remarkable property that
the equidistributed measure $\nu_n$ (supported on $\mathrm{int}(S)$) 
is an optimal solution of \eqref{def-new} for every $n$.
And of course, again, $(\nu_n)_{n\in\N}$ converges weakly to the equilibrium measure
$\phi^\star$ as does any sequence of probability measures 
equi-distributed on Fekete points (see e.g. \cite{berman,book}).

However, even for $S$ being the unit ball or the unit sphere, cases $(d,n)$
 where a D-optimal  measure $\mu\in\mathscr{P}(S)$ 
 is equi-supported on Fekete points are exceptional; see e.g. Bos \cite[p. 134]{Bos-1}. The univariate atomic measure $\nu_n$ in \eqref{gauss-chebyshev} has an immediate extension to the multivariate $S=[-1,1]^d$ case.
 \begin{corollary}
 \label{coro-box}
 Let $\nu_n$ be the Gauss-Chebyshev atomic (univariate) measure in \eqref{gauss-chebyshev}.  Then the multivariate atomic product measure $\nu^\star_n:=\underbrace{\nu_n\otimes\cdots\otimes\nu_n}_{\mbox{$d$ times}}$ on $S=[-1,1]^d$ is an optimal solution of \eqref{def-new}.
 Moreover $\nu^\star_n$ converges to $\phi^\star$ for the weak-star topology of $\mathscr{M}(S^\star)$.
 \end{corollary}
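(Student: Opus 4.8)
The plan is to reduce the multivariate claim to the univariate optimality just established, exploiting the fact that both the target equilibrium measure $\phi^\star=\phi^\square$ and the candidate $\nu^\star_n$ factorize as products. I would first record that the equilibrium measure of the box splits as
\[
\phi^\star\,=\,\bigotimes_{j=1}^d \frac{dx}{\pi\sqrt{1-x^2}}\,,
\]
a product of $d$ copies of the univariate arcsine measure, while by construction $\nu^\star_n=\bigotimes_{j=1}^d\nu_n$ is a genuine probability measure on $S^\square$ since the Chebyshev zeros \eqref{gauss-chebyshev} lie in $(-1,1)$. In view of Theorem \ref{th1}(i), where $\phi^\star$ is shown to be optimal for \eqref{def-new} with \emph{unique} optimal moment matrices $\M_{n-d_g}(g\,\phi^\star)$, it suffices to prove that $\nu^\star_n$ shares the moments of $\phi^\star$ up to degree $2n$: any probability measure on $S^\square$ with those moments produces identical localizing matrices, and hence attains the optimal value $\rho_n$.

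The next step is a short degree count showing that the objective of \eqref{def-new} depends only on moments up to degree $2n$. For a generator $g=\prod_{j}(1-x_j^2)^{\varepsilon_j}\in G_n^\square$ with $\vert\bvarepsilon\vert=k$ one has $\mathrm{deg}(g)=2k$ and $d_g=k$, so the entries of $\M_{n-d_g}(g\,\phi)$ are moments of $\phi$ of order at most $2(n-k)+2k=2n$. Consequently, matching the moments of $\nu^\star_n$ and $\phi^\star$ up to degree $2n$ forces every block $\M_{n-d_g}(g\,\nu^\star_n)$ to equal $\M_{n-d_g}(g\,\phi^\star)$, which is exactly what is needed.

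The moment-matching itself follows from the product structure together with the exactness recalled above. For a monomial $\x^{\balpha}$ with $\vert\balpha\vert\le 2n$ I would write
\[
\int_{S^\square}\x^{\balpha}\,d\nu^\star_n\,=\,\prod_{j=1}^d\int_{-1}^1 x^{\alpha_j}\,d\nu_n\,,\qquad
\int_{S^\square}\x^{\balpha}\,d\phi^\star\,=\,\prod_{j=1}^d\int_{-1}^1 \frac{x^{\alpha_j}\,dx}{\pi\sqrt{1-x^2}}\,,
\]
and invoke the exactness of the univariate Gauss--Chebyshev rule up to degree $2n+1$. Since $\alpha_j\le\vert\balpha\vert\le 2n<2n+1$ for every $j$, each univariate factor agrees, so the two products coincide. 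This establishes that $\nu^\star_n$ is an optimal solution of \eqref{def-new}. The weak-star convergence is then obtained exactly as in the proof of Theorem \ref{th1}(iv): for each fixed $\balpha$ and all $n$ with $2n\ge\vert\balpha\vert$ one has $\int_{S^\square}\x^{\balpha}\,d\nu^\star_n=\phi^\star_{\balpha}$, so all moments converge, and compactness of $S^\square$ together with the determinacy of the moment problem for the compactly supported $\phi^\star$ upgrades this to $\int f\,d\nu^\star_n\to\int f\,d\phi^\star$ for every $f\in\mathscr{C}(S^\square)$.

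I do not anticipate a serious obstacle: the only delicate points are the degree bookkeeping that keeps each univariate exponent $\alpha_j$ within the range $\le 2n+1$ of exactness of the one-dimensional quadrature, and the observation that the criterion of \eqref{def-new} sees only moments up to order $2n$. It is precisely this last fact, combined with the uniqueness of the optimal moment matrices in Theorem \ref{th1}(i), that allows moment-matching to replace a direct optimization argument.
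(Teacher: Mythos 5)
Your proof is correct and follows essentially the same route as the paper: you show $\nu^\star_n$ is a cubature for $\phi^\star$ exact to total degree $2n$ (via exactness of the univariate Gauss--Chebyshev rule in each variable), note that the criterion in \eqref{def-new} depends only on moments up to degree $2n$, and conclude optimality from Theorem \ref{th1}, with weak-star convergence obtained by the same moment-plus-compactness argument as in Theorem \ref{th1}(iv). The only differences are expository: you make explicit the degree bookkeeping for the localizing matrices $\M_{n-d_g}(g\,\phi)$ and the determinacy step that the paper leaves implicit.
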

 \begin{proof}  
 As $\phi^\star=\prod_{j=1}^d dx_j/(\pi(\sqrt{1-x_j^2})$ is an optimal
 solution of \eqref{def-new}, $\nu^\star_n$ provides with
 a cubature for $\phi^\star$, exact up to maximum degree $2n+1$ in 
 \emph{each} variable, and hence also exact for total degree $2n$.
{ As a result, $\nu^\star_n$ is an optimal solution of \eqref{def-new}.}
 The final statement on weak-star convergence is due to the product structure of $\phi^\star$ and $\nu^\star_n$.
 \end{proof}

 \begin{example}
In the case $d=1$ (interval) and $n=8$ (degree 16) let us compare the D-optimal designs obtained with the approach in \cite{dghl19} described in Section \ref{algo-description} for solving original problem \eqref{def-D-optimal}, and with the approach described in this paper for solving variant problem \eqref{D-variant}.

\begin{figure}[h!]
 \begin{center}
		\includegraphics[width=\textwidth]{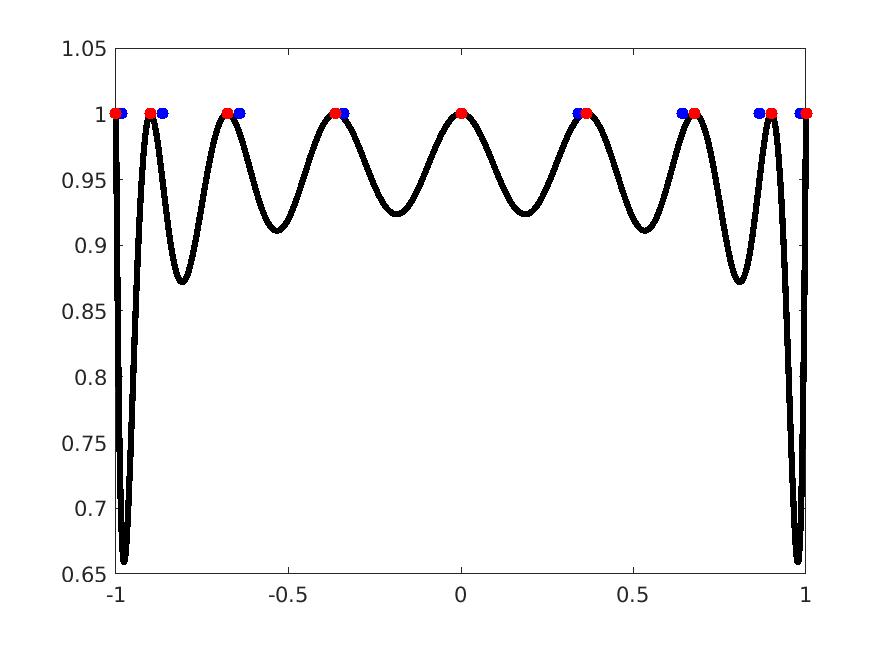}
		\caption{Degree 16 D-optimal design on unit interval: CD polynomial unit level set points (red) solve problem \eqref{def-D-optimal}, whereas roots of the degree 9 Chebyshev polnomials (blue) solve variant problem \eqref{D-variant}. \label{fig:int8}}
\end{center}
\end{figure}
In solving original problem \eqref{def-D-optimal}, in Step-1 of the moment-SOS algorithm one obtains a CD polynomial of degree $2n=16$ (black curve) whose local maxima (located on the unit level set) are the optimal 9 points (red dots), see Figure \ref{fig:int8}.
These points are also optimal Fekete points and roots of $x \mapsto (1-x^2)p'_n(x)$ where $p_n$ is the degree-$n$ Legendre polynomial \cite{Bos-2,hoel-2}. Also represented (blue dots) are the roots 
of the degree 9 Chebyshev polynomial, which cannot be optimal Fekete points, as discussed above.

In solving variant problem \eqref{D-variant}, as $\phi^\star=dx/(\pi\sqrt{1-x^2})$,
the roots of the degree 9 Chebyshev polynomial provide the support of an optimal
atomic probability measure. (Step-1 
of our moment-SOS algorithm \cite{dghl21} also provides the moment vector $\bphi^\star_{18}$ of $\phi^\star$,
rounded to $4$ significant digits.)
\end{example}

\subsection{Comparison with the original formulation}

An optimal atomic solution $\mu^*$ of \eqref{def-D-optimal} is such that
the degree-$2n$ SOS normalized CD polynomial $\x\mapsto 
K^{\mu^*}_n(\x,\x)/s_n$ satisfies
\[0\,\leq\,K^{\mu^*}_n(\x,\x)/s_n\,\leq 1,\quad\forall \x\in S\,,\]
and $\mu^*$ is supported on $r$ points $\z_i\in S$, $i=1,\ldots,r$, with $s_n\leq r\leq s_{2n}$, and $K^{\mu^*}_n(\z_i,\z_i)=s_n$ for all $i=1,\ldots,r$.
If $r=s_n$ then the points $\{z_1,\ldots,z_r\}$ form a set of {Fej\'er points and are also Fekete points}. However this case is not to be expected in general and even in special cases 
as the ones considered in this paper; see e.g. \cite{Bos-1,Bos-2}. Notice that an optimal measure $\mu^*\in\mathscr{P}(S)$ of \eqref{def-D-optimal} is not provided explicitly and in view of Theorem \ref{th1}(iii), $\mu^*$
cannot be the equilibrium measure $\phi^\star$  of $S$.
Only the optimal moment matrix
$\M_n(\mu^*)$ is obtained and is unique (while $\mu^*$ is not unique). 

On the other hand, if one considers the proposed variant
\eqref{def-new} of the D-optimal design problem, then remarkably for 
$S^\star$ being the unit ball, the unit box, or the simplex:

-- For every degree $n$ and every dimension $d$, the associated equilibrium measure $\phi^\star$ of $S^\star$ is an optimal solution.

-- Every cubature of $\phi^\star$ with positive weights, atoms in $S^\star$, and exact up to degree $2n$, provides with a D-optimal atomic measure $\nu^\star_n$ {whose support} is an optimal design.

-- The sequence $(\nu^\star_n)_{n\in\N}$ converges to the equilibrium measure $\phi^\star$ of $S^\star$
for the weak-star topology of $\mathscr{M}(S^\star)$.

\subsection{Computing an optimal atomic measure}
As we have seen, it reduces to that of searching for \emph{any} degree-$2n$ \emph{cubature} with positive weights
and atoms in $S^\star$, for the (known) equilibrium measure $\phi^\star$ of $S^\star$. For the unit box $[-1,1]^n$, the (tensorized) Gauss-Chebyshev cubature already provides a D-optimal design with no computation; see Corollary \ref{coro-box}. For the Euclidean ball and the simplex, some cubatures 
for $\phi^\star$ are also known in some cases. For numerical procedures for computing cubatures, the interested reader is referred to \cite{cools-1,cools-2,Gautschi}.
{
An alternative is to adapt the two-step procedure described in \cite{dghl19} and sketched in Section \ref{algo-description}, to solve the variant \eqref{def-new}.
}

\section{More general semi-algebraic sets}

For the special geometries $S^\star$ of Section \ref{geometry}, we have seen that the infinite-dimensional problem \eqref{def-new} is also equivalent to solving the single finite-dimensional convex relaxation \eqref{def-new-relax}; see Theorem \ref{th1}.
%\begin{equation}\label{D-new-mom}
%  \displaystyle\max_{\bphi=(\phi_{\balpha})\in\R^{s_{2n}}} %\,\{\,\displaystyle\sum_{g\in G^\star_n}
 % \log\det\, \M_{n-d_g}(g\,\bphi):\:\phi_{0}\,=\,1\,;\:
 % \M_{n-d_g}(g\,\bphi)\,\succeq0\,\: g\in G^\star_n\,\}\,,
 %\end{equation}
% where $\M_{n-d_g}(g\cdot\bphi)$) is the localizing matrix associated with the sequence
 %$\bphi=(\phi_{\balpha})_{\balpha\in\N^d_{2n}}$ of \emph{pseudo-moments} and the polynomials $g\in G^\star_n$. Notice that
 %\eqref{D-new-mom} is a relaxation of \eqref{def-new} as one now considers
 %a vector $\bphi$ of pseudo-moments (up to degree $2n$) rather than the vector of moments 
 %up to degree $2n$ of a measure $\phi$ on $S^\star$.  
%However and remarkably, by \cite{lass-tams}, Problem \eqref{D-new-mom}
 %has a unique optimal solution, the sequence $\bphi^\star_{2n}$
 %of moments (up to degree $2n$) of the equilibrium measure $\phi^\star$ of $S^\star$
 %(uniqueness is due to strict concavity of the criterion). Namely:
 
%\begin{theorem}[\cite{lass-tams}[Theorem 1]]
% \label{th2}
%With $n\in\N$ fixed, the semidefinite program \eqref{D-new-mom} has a unique optimal solution $\bphi^\star_{2n}=(\phi^\star_{\balpha})\in\R^{s_{2n}}$ which satisfies 
%\begin{equation}
 %\phi^\star_{\balpha}\,=\,\int_{S^\star} \x^{\balpha}\,d\phi^\star\,,\quad\forall\balpha\in\N^d_{2n}\,,
%\end{equation}
%where $\phi^\star$ is the equilibrium measure of $S^\star$ (optimal solution of \eqref{def-new}).
%\end{theorem}
So while the equilibrium measure $\phi^\star$ is not the unique optimal solution of  \eqref{def-new} (any probability measure on $\R^d$ with same degree-$2n$ moments as $\phi^\star$ is also optimal), its vector $\bphi^\star_{2n}$ of moments up to degree $2n$, is the unique optimal solution of \eqref{def-new-relax}.

\paragraph{Basic semi-algebraic sets $S$} Of course, for the special geometries $S^\star$ (unit ball, unit box or simplex), we know $\phi^\star$ (and so $\phi^\star_{2n}$ as well) and
there is no need to solve \eqref{def-new-relax}. But for
other geometries, e.g., if $S\subset\R^d$ is the compact basic semi-algebraic set
\begin{equation}
    \label{set-S}
S\,=\,\{\,\x\in\R^d:\: g_j(\x)\geq0\,,\:j=1,\ldots,m\,\}\,,\,\end{equation}
for some $g_j\in\R[\x]$, %of degree $d_j$, 
$j=1,\ldots,m$, then inspired by \eqref{def-new}, we propose the following
analogue of the D-optimal variant \eqref{D-variant}:%, \eqref{def-new-box}, \eqref{def-new-simplex}:
\begin{equation}
\label{D-general-mom}
  \displaystyle\max_{\phi\in \mathscr{P}(S)} \:\sum_{j=0}^m\log\det\, \M_{n-r_j}(g_j\cdot\bphi)\,,
  \end{equation}
 where $g_0=\1$, and $r_j=\lceil\mathrm{deg}(g_j)/2\rceil$, $j=0,\ldots,m$. The set
 \begin{equation}
 \label{quad-module}
 Q_n(g_1,\ldots,g_m)\,:=\,\{\,\x\mapsto \sum_{j=0}^m \sigma_j(\x)\,g_j(\x)\,:\quad \sigma_j\in\Sigma[\x]\,;\: \mathrm{deg}(\sigma_j\,g_j)\,\leq\,2n\,\}\,\end{equation}
 is called the truncated quadratic module associated with the polynomials $g_1,\ldots,g_m$,
 a central object in the Moment-SOS hierarchy for polynomial optimization; see e.g.
\cite{lass-book,lass-acta}. As  $S$ is compact, $S\subset\{\,\x: M-\Vert\x\Vert^2\geq0\,\}$ for some $M>0$,
and therefore, we may and will also assume that $g_1(\x)=M-\Vert\x\Vert^2$ (if not we include 
this redundant quadratic constraint in the definition of $S$ without changing $S$).

 \begin{theorem}
 \label{th-S}
  Let $S\subset\R^d$ in \eqref{set-S} be compact with nonempty interior.
  Then
  every optimal solution $\mu\in\mathscr{P}(S)$ of \eqref{D-general-mom} satisfies
  \begin{eqnarray}
      \label{th-S-1}\sum_{j=0}^m s_{n-r_j}-\sum_{j=0}^mg_j(\x)\,K^{g_j\cdot\mu}_{n-r_j}(\x,\x)&\geq&0\,,
      \quad\forall \x\in S\\
      \label{th-S-2}\sum_{j=0}^ms_{n-r_j}-\sum_{j=0}^m g_j(\x)\,K^{g_j\cdot\mu}_{n-r_j}(\x,\x)&=&0\,,\quad\mbox{$\mu$-a.e.}
  \end{eqnarray}
In addition, if $\mu=f\,d\x$ with $f>0$ on $S$, is an optimal solution then
\begin{equation}
\label{th-S-3}
    \sum_{j=0}^m s_{n-r_j}\,=\,\sum_{j=0}^m g_j(\x)\,K^{g_j\cdot\mu}_{n-r_j}(\x,\x)\,,\quad\forall \x\in\R^d.
\end{equation}
 \end{theorem}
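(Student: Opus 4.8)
The plan is to prove a Kiefer--Wolfowitz--type equivalence theorem for the variant \eqref{D-general-mom} by a first-order variational analysis, exploiting the concavity of the criterion. Write $F(\mu):=\sum_{j=0}^m\log\det\M_{n-r_j}(g_j\cdot\mu)$ and $M_j:=\M_{n-r_j}(g_j\cdot\mu)=\int_S g_j(\x)\,\v_{n-r_j}(\x)\v_{n-r_j}(\x)^T\,d\mu(\x)$. Since each $M_j$ is an affine (indeed linear) function of the moments of $\mu$ and $A\mapsto\log\det A$ is concave on the positive definite cone, $F$ is concave on the convex set $\mathscr{P}(S)$, so the first-order conditions I derive below are necessary for optimality and hence hold at \emph{every} optimal $\mu$. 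Before differentiating I must check that at an optimum every block $M_j$ is positive definite: because $S$ has nonempty interior, any measure with a strictly positive continuous density on $S$ makes all $M_j\succ0$ and hence $F$ finite, so the maximum is finite and no maximizer can have a singular block (which would force $F=-\infty$).

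Next, I would fix an optimal $\mu$ and an arbitrary $\x\in S$ and perturb along the segment $\mu_t:=(1-t)\mu+t\,\delta_{\x}\in\mathscr{P}(S)$, $t\in[0,1]$. Using $\M_{n-r_j}(g_j\cdot\delta_{\x})=g_j(\x)\,\v_{n-r_j}(\x)\v_{n-r_j}(\x)^T$ the $j$-th block reads $M_j(t)=(1-t)M_j+t\,g_j(\x)\,\v_{n-r_j}(\x)\v_{n-r_j}(\x)^T\succ0$ for $t<1$. Applying $\tfrac{d}{dt}\log\det(A+tB)=\mathrm{tr}(A^{-1}B)$ together with the ABC theorem \eqref{eq:ch1}, which identifies $\v_{n-r_j}(\x)^TM_j^{-1}\v_{n-r_j}(\x)$ with $K^{g_j\cdot\mu}_{n-r_j}(\x,\x)$, and using $\mathrm{tr}(M_j^{-1}M_j)=s_{n-r_j}$, the right derivative at $t=0$ becomes
\[
\frac{d}{dt}F(\mu_t)\Big|_{t=0^+}=\sum_{j=0}^m\Big(g_j(\x)\,K^{g_j\cdot\mu}_{n-r_j}(\x,\x)-s_{n-r_j}\Big).
\]
Optimality forces this quantity to be $\le0$ for every $\x\in S$, which is precisely \eqref{th-S-1}.

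For \eqref{th-S-2} I would integrate the nonnegative polynomial $D(\x):=\sum_{j}s_{n-r_j}-\sum_{j}g_j(\x)\,K^{g_j\cdot\mu}_{n-r_j}(\x,\x)$ against $\mu$. The decisive identity is $\int_S g_j(\x)\,K^{g_j\cdot\mu}_{n-r_j}(\x,\x)\,d\mu=\mathrm{tr}\big(M_j^{-1}\!\int_S g_j\,\v_{n-r_j}\v_{n-r_j}^T\,d\mu\big)=\mathrm{tr}(M_j^{-1}M_j)=s_{n-r_j}$, whence $\int_S D\,d\mu=0$; since $D\ge0$ on $S$ by \eqref{th-S-1} and $\mu$ is carried by $S$, this yields $D=0$ $\mu$-a.e., i.e.\ \eqref{th-S-2}. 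Finally, when $\mu=f\,d\x$ with $f>0$ on $S$, ``$\mu$-a.e.'' coincides with ``Lebesgue-a.e.\ on $S$''; as $S$ has nonempty interior and $D$ is a genuine polynomial (of degree at most $2n$, since $\deg g_j\le 2r_j$ gives $\deg(g_j\,K^{g_j\cdot\mu}_{n-r_j})\le 2n$), a polynomial vanishing on a set of positive Lebesgue measure vanishes identically, giving \eqref{th-S-3} on all of $\R^d$.

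I expect the one genuinely delicate point to be the justification that every block $M_j$ remains positive definite at an optimum, needed both to make the $\log\det$ derivative meaningful and to license the trace manipulations above; the remaining steps are the classical equivalence-theorem computation of \cite{kief-wolf}, here merely adapted to the block-diagonal information matrix built from localizing matrices rather than from a single moment matrix.
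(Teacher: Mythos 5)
Your proof is correct, but it follows a genuinely different route from the paper's. The paper works with the truncated moment cone $\mathcal{C}_n$ and the cone $\mathcal{P}_n$ of polynomials nonnegative on $S$, invokes Riesz--Haviland to identify $\mathcal{C}_n^*=\mathcal{P}_n$, and then writes the KKT conditions of the resulting finite-dimensional conic program: the multiplier computation yields $\lambda^*=\sum_{j}s_{n-r_j}$ together with a dual certificate $q^*\in\mathcal{P}_n$, so that \eqref{th-S-1} is the statement $q^*\geq0$ on $S$ and \eqref{th-S-2} comes from the complementarity $\langle\bmu^*_{2n},q^*\rangle=0$. You instead run the classical Kiefer--Wolfowitz perturbation argument directly on $\mathscr{P}(S)$: differentiating $t\mapsto F\left((1-t)\mu+t\,\delta_{\x}\right)$ at $t=0^+$ gives \eqref{th-S-1}, and you recover the complementarity statement \eqref{th-S-2} by hand from the trace identity $\int_S g_j\,K^{g_j\cdot\mu}_{n-r_j}(\x,\x)\,d\mu=\mathrm{tr}\left(M_j^{-1}M_j\right)=s_{n-r_j}$. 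Your route is more elementary and self-contained: it needs neither the duality $\mathcal{C}_n^*=\mathcal{P}_n$ nor the constraint qualification (left implicit in the paper) guaranteeing existence of KKT multipliers, and your preliminary verification that every block $M_j$ is nonsingular at an optimum is a point the paper glosses over. What the paper's formulation buys in exchange is the dual certificate $q^*$ exhibited explicitly as an element of $\mathcal{P}_n$, which ties the optimality conditions to the dual SOS problem \eqref{D-general-mom-dual} and to the generalized Pell-equation framework used throughout the paper. The final step \eqref{th-S-3} is identical in both proofs: your $D$, respectively the paper's $q^*$, is a polynomial vanishing Lebesgue-a.e.\ on $S$, hence identically, since $S$ has nonempty interior. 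One cosmetic remark: necessity of your first-order condition requires only optimality plus differentiability along the segment, so the appeal to concavity is not actually needed for the (necessary-condition) direction you prove.
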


\begin{proof}
  Introduce the two (finite-dimensional) convex cones
\begin{eqnarray*}
      \mathcal{C}_n&:=&\{\,\bphi=(\phi_{\balpha})_{\balpha\in\N^d_{2n}}:\:(\phi_{\balpha})\,=\,\left(\int_S\x^{\balpha}\,d\phi\right)\mbox{ for some $\phi\in\mathscr{M}(S)_+$}\}\\
      \mathcal{P}_n&:=&\{\,p\in\R[\x]_{2n}: \:\mbox{$p\geq0$ on $S$}\}\,,
      \end{eqnarray*}
      and let $\mathcal{C}^*_n$ and $\mathcal{P}^*_n$ denote their respective duals. {
      From the Riesz-Haviland Theorem (see e.g. \cite[Theorem 2.34]{lass-book}) and since $S$ is compact with nonempty interior,} it turns out that $\mathcal{C}_n=\mathcal{P}^*_n$ and $\mathcal{C}_n^*=\mathcal{P}_n$.
      For every $j\leq m$, let the real symmetric matrices $\B^j_{\balpha}$ {of size $n-r_j$} be defined by
      \[\sum_{j=0}^m g_j(\x)\,\v_{n-r_j}(\x)\v_{n-r_j}(\x)^T\,=\,\sum_{\balpha\in\N^d_{2n}}\B^j_{\balpha}\,\x^{\balpha}\,,\:\forall \x\in\R^d\,.\]
      {Note that by construction it holds $$\M_{n-r_j}(g_j\,\bphi) = \sum_{\balpha \in \N^d_{2n}} \B^j_{\balpha} \phi_{\balpha}.$$}
      Then \eqref{D-general-mom} also reads:
   \begin{equation}
\label{D-general-mom2}
 \displaystyle\max_{\bphi\in \mathcal{C}_n} \:\{\,\sum_{j=0}^m\log\det\, \M_{n-r_j}(g_j\,\bphi)\,:\:\phi_0=1\,\}\,,
  \end{equation}   
  which is also a convex program. 
  Let $\mu^*$ be an optimal solution of 
  \eqref{D-general-mom} and let $\bmu^*_{2n}\in\mathcal{C}_n$ be its vector of
  moments up to degree $2n$ (hence an optimal solution of \eqref{D-general-mom2}). 
  {Recalling that ${\mathbf X}^{-1}$ is the gradient of $\log\det\mathbf X$}, the necessary KKT-optimality conditions  
  imply that there exists some scalar $\lambda^*$ and some polynomial $q^*\in\mathcal{P}_n$ such that
  \[\lambda^*1_{\balpha=0}-\sum_{j=0}^m \langle\, \M_{n-r_j}(g_j\,\bmu^*_{2n})^{-1},\B^j_{\balpha}\,\rangle\,=\,q^*_{\balpha}\,,\:\forall\balpha\in\N^d_{2n}\,;\quad \langle \bmu^*_{2n},q^*\rangle\,=\,0\quad\mbox{[complementarity]}\,.\]
Multiplying by $(\mu^*_{2n})_{\balpha}$ and summing up, yields $\lambda^*=\sum_{j=0}^ms_{n-r_j}$,
and multiplying by $\x^{\balpha}$ and summing up yields the desired result \eqref{th-S-1}-\eqref{th-S-2}. To get \eqref{th-S-3} observe that  with $f>0$ on $S$, and $q^*\in \mathcal{P}_n$
\[0\,=\,\langle\bmu^*_{2n},q^*\rangle\,=\,\int_S q^*\,d\mu^*\,=\,\int_S q^*\,f\,d\x\quad\Rightarrow\quad \mbox{$q^*=0$,  a.e. on $S$}\,,\]
and since $S$ has nonempty interior, $q^*=0$ for all $\x\in\R^d$.
\end{proof}
{Of course \eqref{D-general-mom} (or, equivalently \eqref{D-general-mom2}) is not solvable in general because no tractable description of $\mathcal{C}_n$ is available. Therefore} we also introduce
its convex relaxation (the analogue of \eqref{mom-conv-relax})
\begin{equation}
\label{general-mom-relax}
 \displaystyle\max_{\bmu=(\mu_{\balpha})} \left\{ \displaystyle\sum_{j=0}^m\log\det \,\M_{n-r_j}(g_j\,\bmu):\:
 \mu_0=1\,;\quad\M_{n-r_j}(g_j\,\bmu)\,\succeq\,0\,,\:j=0,\ldots,m\,\right\}\,,
 \end{equation}
where $\bmu=(\mu_{\balpha})_{\balpha\in\N^d_{2n}}$ is a vector of pseudo-moments up to degree $2n$.
 
In Step-1 of the algorithm described in \cite{dghl19,dghl21} and in Section \ref{algo-description} to solve \eqref{def-D-optimal}, one solves almost the same problem as \eqref{general-mom-relax} except that the 
criterion is simply $\log\,\mathrm{det}\,\M_n(\bmu)$. %{(e.g. see (14) in \cite{dghl19})}. 
So it is straightforward to adapt its Step-1 for solving \eqref{general-mom-relax}. Then Step-2 (whose input is the 
output $\bmu^*_{2n}$ of Step-1), remains exactly the same. Its goal is to extract the support of 
an atomic probability measure on $S$ with same moments as $\bmu^*_{2n}$, that is, an optimal design.
Equivalently, \eqref{general-mom-relax} reduces to
\begin{equation}
\label{D-general-mom-inf}
 \rho_n\,=\,\displaystyle\min_{\bmu=(\mu_{\balpha})}\,\left\{\,-\displaystyle\sum_{j=0}^m\log\det\, \M_{n-r_j}(g_j\,\bmu):\:
 {\mu_0}=1\,;\quad
 \M_{n-r_j}(g_j\, \bmu)\,\succeq\,0\,,\:\forall j\leq m\,\right\}\,,
  \end{equation}
whose optimal value $\rho_n$ is minus that of \eqref{general-mom-relax}. Consider the convex optimization problem:
\begin{equation}
\label{D-general-mom-dual}
 \rho^*_n\,=\,\displaystyle\max_{\A_j\succeq0}\,\left\{\,\displaystyle\sum_{j=0}^m\log\det\,\A_j:\:
  \displaystyle\sum_{j=0}^ms_{n-r_j}\,=\,\displaystyle\sum_{j=0}^m g_j(\x)\,\v_{n-r_j}(\x)^T\A_j\,\v_{n-r_j}(\x)\,,\quad\forall \x\in\R^d\,\right\}\,.
 \end{equation}
 As  shown in \cite{lass-cras} and \cite[Theorem 3.2]{lass-tams}, if $\1\in\mathrm{int}(Q_n(g_1,\ldots,g_m))$\footnote{If $g_1(\x)=M-\Vert\x\Vert^2$ then the condition $\1\in\mathrm{int}(Q_n(g_1,\ldots,g_m))$ is satisfied; see \cite{lass-cras}.}
 then \eqref{D-general-mom-dual} is a dual of \eqref{D-general-mom-inf}, and strong duality holds, i.e., $\rho_n=\rho^*_n$.  
Moreover 
\eqref{D-general-mom-inf} (resp. \eqref{D-general-mom-dual}) has a unique optimal solution 
$\bmu^*_{2n}$ (resp. $(\A^*_j)$), and $\A^*_j=\M_{n-r_j}(g_j\cdot\bmu^*_{2n})^{-1}$ for all $j=0,\ldots,m$. Therefore
\begin{equation}
\label{aux-mu}
\displaystyle\sum_{j=0}^ms_{n-r_j}=\displaystyle\sum_{j=0}^m g_j(\x)\,K^{g_j\cdot\bmu^*_{2n}}_{n-r_j}(\x,\x)\,,\quad\forall\x\in\R^d\,.
\end{equation}

{The difference with the three special cases of $S$ in Section \ref{geometry}, is that now the linear functional $\bmu^*_{2n}\in\R[\x]_{2n}^*$ is not guaranteed to have a representing measure $\mu^*$ on $S$ (Step-1 of the algorithm in \cite{dghl21} and  its adaptation {\eqref{general-mom-relax}} proposed for \eqref{D-variant} assumes that it is the case), let alone $\mu^*$ being the equilibrium measure of $S$. However as shown in {\cite[Theorem 10]{lass-cras}}, each accumulation point of the sequence $(\bmu^*_{2n})_{n\in\N}$ has a representing probability measure $\mu^*$ on $S$.}

\begin{remark}
Recall that if \eqref{D-general-mom} has an optimal solution $\mu\in\mathscr{P}(S)$ such that $\mu=fd\x$ with
$f>0$ on $S$, and if $S$ has nonempty interior, 
then by Theorem \ref{th-S}
\begin{equation}
\label{aux-mu-1}
\sum_{j=0}^m s_{n-r_j}=\sum_{j=0}^m g_j(\x)\,K^{g_j\cdot\mu}_{n-r_j}(\x,\x)\,,\quad\forall \x\in\R^d\,,\end{equation}
and its vector $\bmu_{2n}:=(\mu_{\balpha})_{\balpha\in\N^d_{2n}}$ of moments up to degree $2n$
is an optimal solution of \eqref{D-general-mom2} of which \eqref{general-mom-relax} is a relaxation. {But then
from \eqref{aux-mu-1} and \eqref{aux-mu}, it follows that 
$(\A_j:=\M_{n-r_j}(g_j\,\mu)^{-1})_{j=0,\ldots,m}$ form an optimal solution of \eqref{D-general-mom-dual}
and $\bmu_{2n}:=(\mu_{\balpha})_{\balpha\in\N^d_{2n}}$ is the unique optimal solution of 
\eqref{general-mom-relax}.}
In other words, the relaxation \eqref{general-mom-relax} (or \eqref{D-general-mom-inf}) is \emph{exact}. That is, the vector of pseudo moments $\bmu^*_{2n}$, unique optimal solution of \eqref{general-mom-relax}, 
is in fact the vector of moments (up to degree $2n$) of $\mu$.
\end{remark}

\subsection*{Link between $\bmu^*_{2n}$ and the equilibrium measure of $S$}
{Even if an optimal solution $\bmu^*_{2n}$ of \eqref{general-mom-relax} (or, equivalently \eqref{D-general-mom-inf}) 
has no representing measure, can we say something on possible links to the equilibrium measure $\phi^*$ of $S$?

With $\phi^*$ being the equilibrium measure of $S$, introduce the polynomial
$p^*_n\in\R[\x]_{2n}$ defined by:
\begin{equation}
\label{aux-phi*-1}
\x\mapsto (\sum_{j=0}^m s_{n-r_j})\,p^*_n(\x)\,:=\,\sum_{j=0}^m g_j(\x)\,K^{g_j\cdot\phi^*}_{n-r_j}(\x,\x)\,,\quad\forall\x\in\R^d\,,
\end{equation}
If $p^*_n$ would be the constant polynomial equal to $1$, then in view of \eqref{aux-mu}, 
the moment sequence $\bphi^*_{2n}$ of $\phi^*$ 
would be an optimal solution of \eqref{general-mom-relax}.}  

{Under some regularity conditions on $(S,\phi^*)$:
\begin{itemize}
    \item The sequence of measures $(p^*_n\phi^*)_{n\in\N}$ weak star converges to $\phi^*$,
    \item and under an additional condition, one obtains the stronger result, $\lim_{n\to\infty}p^*_n(\x)=1$, uniformly on compact subsets of $\mathrm{int}(S)$,
    which shows that somehow \eqref{aux-phi*-1} is close to \eqref{aux-mu} (at least when $\x\in\mathrm{int}(S)$).
\end{itemize}
Indeed fix $j\leq m$ arbitrary, and consider the measure $\nu_j:=g_j\phi^*$ on $S$ and its associated Christoffel polynomial $K^{\nu_j}_n(\x,\x)$. We also assume that
$g_j>0$ on $\mathrm{int}(S)$.
If $(S,\nu_j)$ satisfies the Bernstein-Markov property (see \cite[(1.3) p. 603]{kroo}) then
by \cite[Theorem 4.4.4]{book},
\[\lim_{n\to\infty}\int_S f(\x) \frac{K^{g_j\,\phi^*}_n(\x,\x)}{s_n}\,g_j(\x)\,d\phi^*\,=\,\lim_{n\to\infty}\int_S f(\x) \frac{K^{\nu_j}_n(\x,\x{)}}{s_n}\,d\nu_j\,=\,\int_S f\,d\phi^*\,,\quad \forall f\in\mathscr{C}(S)\,.\]
Hence as $j$ was arbitrary, it follows that
\begin{equation}
    \label{weak-strong}
\lim_{n\to\infty}\int_S f(\x)\,p^*_n(\x)\,d\phi^*(\x)\,=\,\int_S f\,d\phi^*\,,\quad\forall f\in\mathscr{C}(S)\,,
\end{equation}
i.e., $p^*_n\,\phi^*\,\Rightarrow \phi^*$ as $n\to\infty$ (for the weak convergence of probability measures).

In addition, if $\lim_{n\to\infty} K^{\phi^*}_n(\x,\x)/s_n=1$, uniformly on compact subsets of $\mathrm{int}(S)$,
then
\[\lim_{n\to\infty}K^{\nu_j}_n(\x,\x)/s_n\,=\,1/g_j(\x)\,,\quad\mbox{uniformly on compact subsets of $\mathrm{int}(S)$}\,.\]
%where $f^*$ is the density of $\phi^*$ w.r.t. Lebesgue measure.
See e.g. \cite[Theorem 4.4.1]{book} (which is a consequence
of (1.5) in \cite[Theorem 1.1]{kroo} and \cite[Remark(b), p.63]{kroo}). Hence, as $j$ was arbitrary,
\[\lim_{n\to\infty}g_j(\x)\,K^{\nu_j}_n(\x,\x)/s_n\,=\,1\,\quad\mbox{uniformly on compact subsets of $\mathrm{int}(S)$,}\]
for every $0\leq j\leq m$, which in turn implies
\begin{equation}
\label{aux-phi*-2}\lim_{n\to\infty}p^*_n(\x)\,=\,1\,,
\quad\mbox{uniformly on compact subsets of $\mathrm{int}(S)$.}
\end{equation}}

Comparing \eqref{aux-mu} with \eqref{aux-phi*-1}-\eqref{aux-phi*-2} {shows} 
that even if the linear functional $\bmu^*_{2n}$, unique optimal solution of \eqref{general-mom-relax},  does not have a representing measure, asymptotically it is strongly connected to the equilibrium measure $\phi^*$ of $S$. Indeed, for sufficiently large degree $n$ and for every $\x\in\mathrm{int}(S)$:
\[\sum_{j=0}^m g_j(\x)\,K^{g_j\cdot\phi^*}_{n-r_j}(\x,\x)\,\approx\,\sum_{j=0}^m s_{n-r_j}\,=\,\displaystyle\sum_{j=0}^m g_j(\x)\,K^{g_j\cdot\bmu^*_{2n}}_{n-r_j}(\x,\x)\,.\]
Hence this asymptotic property supports our claim that the variant \eqref{D-general-mom} of the D-optimal design problem 
still  has a strong connection with the equilibrium measure of $S$.

\section{Conclusion}

We have introduced a variant of the D-optimal 
design problem with a more general information matrix which takes into account the geometry of the design space $S\subset\R^d$. One main reason to introduce such a variant is that remarkably, for the three cases where $S$ is the Euclidean ball, the  unit box and the canonical simplex (in any dimension),
the equilibrium measure $\phi^*$ of $S$ (in pluripotential theory) provides an optimal solution for every degree $n$. Therefore finding a D-optimal design 
(for this variant) reduces to finding a cubature for $\phi^*$, with positive weights, atoms in $S$, and exact up to degree $2n$.
Moreover and trivially, the associated resulting sequence of atomic probability measures $(\nu^*_n)_{n\in\N}$ converges to $\phi^*$ for the weak-star topology. 

If the link between 
statistics and approximation theory has been largely invoked in the literature since the pioneer 
works \cite{kief-wolf} and \cite{karlin}, this new variant 
makes it even stronger and striking, at least for the three specific cases of $S$.
Hence such a remarkable property suggests that one might
use the log-det criterion of this new information matrix, even for more general compact basic semi-algebraic sets
$S\subset\R^d$. In particular, the two-step algorithm proposed in \cite{dghl21} is easily adapted to this new variant of the D-optimal design problem.

\end{document}